\let\oldlabel=\label
\def\prellabel{\marginparsep=1em\marginparwidth=44pt
  \def\label##1{\oldlabel{##1}\ifmmode\else\ifinner\else
         \marginpar{{\footnotesize\ \\ \tt
                    ##1}}\fi\fi}}
\def\NZQ{\mathbb}               
\def\NN{{\NZQ N }}
\def\ZZ{{\NZQ Z}}
\def\opn#1#2{\def#1{\operatorname{#2}}} 
\opn\chara{char}
\opn\rank{rank}
\opn\hilb{Hilb}
\opn\reg{reg}
\opn\projdim{projdim}
\opn\Ass{Ass}
\opn\gr{gr}
\opn\Rees{{\mathcal R}}
\let\sect=\cap
\let\tensor=\otimes
\let\Sect=\bigcap
\let\Dirsum=\bigoplus
\newtheorem{theorem}{Theorem}[section]
\newtheorem{lemma}[theorem]{Lemma}
\newtheorem{corollary}[theorem]{Corollary}
\newtheorem{proposition}[theorem]{Proposition}
\theoremstyle{definition}
\newtheorem{remark}[theorem]{Remark}
\newtheorem{definition}[theorem]{Definition}
\newtheorem{example}[theorem]{Example}
\newtheorem{remark/example}[theorem]{Remark/Example}
\newtheorem{question}[theorem]{Question}
\newtheorem*{theorem*}{Theorem}
\let\epsilon=\varepsilon
\let\phi=\varphi
\let\kappa=\varkappa
\opn\ini{in}
\opn\KRS{KRS}
\opn\krs{krs}
\opn\Krs{Krs}
\opn\DEL{DEL}
\opn\diag{diag}
\opn\Ker{Ker}
\opn\Image{Im}
\opn\DD{{\mathcal D}}
\opn\SS{{\mathcal S}}
\opn\MM{{\mathcal M}}
\opn\GL{GL}
\opn{\hht}{ht}
\opn\Cl{Cl}
\opn\cl{cl}
\opn\height{height}
\opn\id{id}
\opn\Hom{Hom}
\opn\Tor{Tor}
\opn\Ext{Ext}
\opn\Spec{Spec}
\def\cF{{\mathcal F}}
\def\cS{{\mathcal S}}
\def\cI{{\mathcal I}}
\def\cP{{\mathcal P}}
\def\cL{{\mathcal L}}
\def\mm{{\mathfrak m}}
\def\addots{\mathinner{\mkern1mu\raise1pt\hbox{.}\mkern2mu\raise4pt\hbox{.}
        \mkern2mu\raise7pt\vbox{\kern7pt\hbox{.}}\mkern1mu}}
\def\discuss#1{\relax}
\numberwithin{equation}{section}
\author{Winfried Bruns}
\address{Universit\"at Osnabr\"uck, Institut f\"ur Mathematik, 49069 Osnabr\"uck, Germany}
\email{wbruns@uos.de}
\author{Aldo Conca}
\address{ Dipartimento di Matematica,
Universit\`a degli Studi di Genova, Italy}
\email{conca@dima.unige.it}
\title{Linear resolutions of powers and products}
\dedicatory{To Gert-Martin Greuel on the occasion of his seventieth birthday}
\keywords{linear resolution, determinantal ideal, polymatroidal ideal, ideal of linear forms, primary decomposition, toric deformation, Rees algebra, Koszul algebra, regularity, Gröbner basis}
\subjclass[2010]{13A30, 13D02, 13C40, 13F20, 14M12, 13P10}
\date{}
\begin{document}

\begin{abstract}
The goal of this paper is to present examples of  families   of homogeneous ideals in the polynomial ring over a field that satisfy the following condition:  every  product of ideals of  the family has a linear free resolution. As we will see, this condition is strongly correlated to good  primary decompositions of the products and good homological and arithmetical properties of the associated multi-Rees algebras. The following families will be discussed in detail: polymatroidal ideals, ideals generated by linear forms and Borel fixed ideals of maximal minors. The main tools are Gröbner bases and Sagbi deformation.  
\end{abstract}

\maketitle


\section{Introduction}
The goal of this paper is to present examples of  families   of homogeneous ideals in the polynomial ring $R=K[X_1,\dots, X_n]$  over a field $K$ that satisfy the following condition:  every  product of ideals of  the family has a linear free resolution. As we will see, this condition is strongly correlated to ``good''  primary decompositions of the products and ``good'' homological and arithmetical properties of the associated multi-Rees algebras.

For the notions and basic theorems of commutative algebra that we will use in the following we refer the reader to the books of Bruns and Herzog \cite{BH} and Greuel and Pfister \cite{GP}. However, at one point our terminology differs from \cite{GP}: where \cite{GP} uses the attribute ``lead'' (monomial, ideal etc.) we are accustomed to ``initial'' and keep our convention in this note. Moreover,\discuss{Added} we draw some standard results from our survey \cite{BC2}.

The extensive experimental work for this paper (and its predecessors) involved several computer algebra systems: CoCoA \cite{Cocoa}, Macaulay 2 \cite{M2}, Normaliz \cite{Nmz} and Singular \cite{DGPS}.

Let us first give a name to the condition on free resolutions in which we are interested and as well recall the definition of ideal with linear powers from \cite{BCV}. 

\begin{definition}\leavevmode
\begin{enumerate}
\item A homogeneous ideal $I$ of $R$ has \emph{linear powers} if $I^k$ has a linear resolution for every $k\in \NN$.
\item A (not necessarily finite) family of homogeneous ideals $\cF$ has \emph{linear products} if  for all $I_1,\dots, I_w\in \cF$ the product $I_1\cdots I_w$ has a linear resolution.
\end{enumerate}
\end{definition}

By \emph{resolution} we mean a graded free resolution, and we call such a resolution \emph{linear} if the matrices in the resolution have linear entries, of course except the matrix that contains the generators of the ideal. However, we assume that the ideal is generated by elements of constant degree. This terminology will be applied similarly to graded modules. 

Note that in (2) we have not demanded that the ideals are distinct, so, in particular, powers and products of powers of elements in $\cF$ are required to have linear resolutions.  

Given an ideal $I$ of $R$, we denote the associated Rees algebra by $R(I)$  and, similarly, for ideals $I_1,\dots, I_w$ of $R$ we denote the associated multi-Rees algebra by $R(I_1,\dots, I_w)$. If each  ideal $I_i$ is homogeneous and generated by elements of the same degree, say  $d_i$,  then $R(I_1,\dots, I_w)$ can be given the structure of a standard $\ZZ^{w+1}$-graded algebra. In Section \ref{ReesAlg} we will explain this in more detail.
  
The two notions introduced above can be characterized homologically.  In  \cite{R} and \cite{BCV}  it is proved that 
an ideal $I$ of $R$ has linear powers if and only if  $\reg_0 R(I)=0$.  In Theorem \ref{Tim} we extend this result by showing that  a family of ideals $\cF$ of $R$ has linear products if and only if for all  $I_1,\dots, I_w\in \cF$  one has  $\reg_0 R(I_1,\dots, I_w)=0$.  Here $\reg_0$ refers to the Castelnuovo-Mumford regularity computed according to the $\ZZ$-graded structure of the Rees algebra induced by inclusion of  $R$ in it, see Section \ref{PartReg} and Section \ref{ReesAlg} for the precise definitions and   for the proof of this statement. 

The prototype of  a family of ideals with linear products is  the following.

\begin{example} A monomial  ideal $I$ of $R$ is \emph{strongly stable} if  $I:(X_i)\supseteq  I:(X_j)$ for all $i,j$,  $1\leq i<j\leq n$. Strongly stable ideals are \emph{Borel fixed} (i.e., fixed under the $K$-algebra automorphisms of $R$ induced by upper triangular linear transformations). In characteristic $0$, every Borel fixed ideal is strongly stable. The regularity of a strongly stable ideal $I$ is the largest degree of a minimal generator  of $I$. Hence  a strongly stable ideal generated in a single degree has a linear resolution. Furthermore it is easy to see that  the product of strongly stable ideals is strongly stable. Summing up, the family
$$
\cF=\{ I : I \mbox{ is a strongly stable ideal generated in a single degree} \}
$$
has linear products. 
\end{example} 

The following example, discovered in the late nineties by the second author in collaboration with Emanuela De Negri, shows that the Rees algebra associated with  a strongly stable ideal need not be Koszul, normal or Cohen-Macaulay. 

\begin{example} In the polynomial ring $K[X,Y,Z]$ consider the smallest strongly stable ideal  $I$ that contains the three monomials $Y^6$, $X^2Y^2Z^2$,  $X^3Z^3$. The ideal $I$ is generated by 
\begin{multline*}
X^6, X^5Y, X^4Y^2, X^3Y^3, X^2Y^4, XY^5, Y^6, X^5Z, X^4YZ, X^3Y^2Z, X^2Y^3Z, X^4Z^2,\\
X^3YZ^2, X^2Y^2Z^2, X^3Z^3.
\end{multline*}
It has a non-quadratic,  non-normal and non-Cohen-Macaulay Rees algebra. Indeed, $R(I)$ is defined by $22$ relations of degree $(1,1)$, $72$ relations of degree $(0,2)$ and exactly one relation of degree $(0,3)$ corresponding to $(X^2Y^2Z^2)^3=(Y^6)(X^3Z^3)^2$ and its $h$-polynomial (the numerator of the Hilbert series) has negative coefficients. Therefore it is not Cohen-Macaulay, and by Hochster's theorem \cite[Th.~6.3.5]{BH} it cannot be normal.
\end{example} 

On the other hand, for a  \emph{principal strongly stable} ideal, i.e., the smallest strongly stable ideal containing a given monomial, the situation is much better. Say $u=X_1^{a_1}\cdots X_n^{a_n}$ is a monomial of $R$ and $I(u)$ is the smallest strongly stable ideal containing $u$. Then 
\begin{equation}
I(u)=\prod_{i=1}^n (X_1,\dots, X_i)^{a_i}=\Sect_{i=1}^n (X_1,\dots, X_i)^{b_i}, \qquad b_i=\sum_{j=1}^i a_j.\label{princ}
\end{equation}
Since the powers of an ideal generated by variables are primary and hence integrally closed, the  primary decomposition  formula \eqref{princ} implies right away that the ideal $I(u)$ is integrally closed.  It is an easy consequence of \eqref{princ} that for every pair of monomials $u_1,u_2$  one has 
$$I(u_1)I(u_2)=I(u_1u_2).$$ 
It follows that products of principal strongly stable ideals are integrally closed.  Hence
the multi-Rees algebra $R(I(u_1),\dots, I(u_w))$ associated with principal strongly stable ideals $I(u_1),\dots,I(u_w)$  is normal, which implies that it is also Cohen-Macaulay. 

Furthermore De Negri \cite{DN} proved that  the fiber  ring of $R(I(u_1))$ is defined by a Gr\"obner basis of quadrics and very likely  a similar statement  is  true for the  multi-fiber  ring of  the multi-Rees algebra  $R(I(u_1),\dots, I(u_w))$. 

Let us formalize the properties of the primary decomposition that we have observed for principal strongly stable ideals:

\begin{definition}\leavevmode
\begin{enumerate}
\item An ideal $I\subseteq R$ is of \emph{P-adically closed} if 
$$
I=\bigcap_{P\in\Spec R} P^{(v_P(I))}
$$
where $v_P$ denotes the $P$-adic valuation: $v_P(I)=\max\{k\ge 0: P^{(k)}\supseteq I\}$.
\item The family $\cF$ of ideals has the \emph{multiplicative intersection property} if for every product $J=I_1\cdots I_w$ of ideals $I_i\in\cF$ one has
$$
J=\bigcap_{P\in\cF\cap\Spec R} P^{(v_P(J))}.
$$
\end{enumerate}
\end{definition}

Note that a P-adically closed  ideal is integrally closed. Therefore all products considered in (2) are integrally closed. The multiplicative intersection property does not necessarily imply that the prime ideals in $\cF$ have primary powers, but this is obviously the case if $v_P(Q)=1$ whenever $Q\subset P$  are prime ideals in the family.

Our goal is to present three families $\cF$ of ideals that generalize the family of principal strongly stable ideals in different directions and have some important features in common: 
\begin{enumerate}
\item  $\cF$ has linear products;
\item $\cF$ has the multiplicative intersection property;
\item the multi-Rees algebras associated to ideals in the family have good homological and arithmetical  properties and defining equations (conjecturally) of low degrees. 
\end{enumerate}
These families are 
\begin{enumerate}
\item[(a)] polymatroidal ideals (Section \ref{sectpolym});
\item[(b)] ideals of linear forms (Section \ref{sectplf});
\item[(c)] Borel fixed ideals of maximal minors (Section \ref{sectneid}).
\end{enumerate} 
Each family has specific properties that will be discussed in detail. 

\section{Partial regularity}\label{PartReg}

Let $R$ be a $\ZZ^r$-graded ring. The degree of an element $x$ of $R$ is denoted by $\deg x$, and if we speak of the degree of an element it is assumed that the element is homogeneous. In the following it will be important to consider the partial $\ZZ$-degrees defined by the $\ZZ^r$-grading: by $\deg_i x$ we denote the $i$-th coordinate of $\deg x$, and speak of $\deg_i$ as the \emph{$i$-degree}. The same terminology applies to $\ZZ^r$-graded $R$-modules. One calls $R$ \emph{a standard $\ZZ^r$-graded} $K$-algebra if $R_0$, its component of degree $0\in\ZZ^r$, is the field $K$ and $R$ is generated as a $K$-algebra by homogeneous elements whose degree is one of the unit vectors in $\ZZ^r$. The ideal generated by the elements of nonzero degree is denoted $\mm_R$ or simply by $\mm$. It is a maximal ideal, and $K$ is identified with $R/\mm_R$ as an $R$-module.

If $n=1$, then no index is used for degree or any magnitude derived from it. Moreover, if it is convenient, we will label the coordinates of $\ZZ^r$ by $0,\dots,r-1$.

For any finitely generated graded module $M$ over a standard $\ZZ^r$-graded polynomial ring $R$ we can define \emph{(partial) Castelnuovo-Mumford regularities} $\reg_i(M)$, $i=1,\dots,r$. In the bigraded case they have been introduced by Aramova, Crona and De Negri \cite{ACD}. First we set
$$
\sup\nolimits_iM=\sup\{\deg_i x: x\in M\}.
$$
Next, let
$$
t_{ik}(M)=\sup\nolimits_i \Tor_k^R(K,M).
$$
Since the $\Tor$-modules are finite dimensional vector spaces, $t_{ik}(M)<\infty$ for all $i$ and $k$. Moreover, $t_{ik}(M)=-\infty$ for all $k>\dim R$ by Hilbert's syzygy theorem. Now we can define
$$
\reg_i^R(M)=\reg_i(M)=\sup_k \{t_{ik}(M)-k\}.
$$
The $\Tor$-modules have a graded structure since $M$ has a minimal graded free resolution\discuss{cF changed to cL}
$$
\cL:\ 0\to \Dirsum_{g\in\ZZ^r} R(-g)^{\beta_{pg}}\to \dots\to \Dirsum_{g\in\ZZ^r} R(-g)^{\beta_{0g}}\to M\to 0 ,\qquad p=\projdim M.
$$
Then $t_{ik}(M)$ is the maximum $i$-th coordinate of the shifts $g$ for which $\beta_kg\neq 0$. The $\beta_{kg}$ are called the \emph{$k$-th graded Betti numbers} of $M$.

As in the case of $\ZZ$-gradings one can compute partial regularities from local cohomology.  

\begin{theorem}\label {EG}
Let $R$ be a standard $\ZZ^r$-graded polynomial ring and $M$ a finitely graded $R$-module. Then
$$
\reg_i(M)=\sup_k\{\sup\nolimits_i H_\mm^k(M)+k\}
$$
for all $i$. 
\end{theorem}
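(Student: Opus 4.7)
The plan is to follow the classical Eisenbud-Goto strategy via multigraded local duality, in the spirit of \cite{ACD} for the bigraded case. The ring $R$ is standard $\ZZ^r$-graded, hence Cohen-Macaulay, with canonical module $\omega_R=R(-\mathbf{a})$, where $\mathbf{a}=\sum_{j=1}^n\deg X_j\in\NN^r$.

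First I would invoke multigraded local duality to obtain a natural $\ZZ^r$-graded isomorphism
$$
H_\mm^k(M)^{\vee}\iso \Ext_R^{n-k}(M,\omega_R),
$$
where $(-)^{\vee}$ denotes the graded $K$-dual. This converts the supremum $\sup_i$ on local cohomology into a negative infimum $-\inf_i$ on Ext, reducing the theorem to a statement about a minimal $\ZZ^r$-graded free resolution $\cL\to M$ evaluated against $\omega_R$.

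Next I would fix such a minimal resolution $\cL$ and analyze the dual complex $\Hom_R(\cL,\omega_R)$. Since $\Hom_R(R(-g),\omega_R)=\omega_R(g)$, the $i$-th degrees of the generators of $\Hom_R(\cL_{n-k},\omega_R)$ are controlled by the partial Betti shifts $t_{i,n-k}(M)$ together with the shift vector $\mathbf{a}$. Taking cohomology yields, after the re-indexing $j=n-k$, one half of the asserted identity for each coordinate $i$. Equivalently, one can run the same argument through the Cech-Koszul double complex $\check{C}^\bullet(X_1,\dots,X_n;R)\otimes_R\cL$ and use the Cohen-Macaulayness of $R$ to collapse one of the two spectral sequences to a single row.

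The reverse inequality is the main technical point. As in the classical single-graded setting, one argues that the extremal shifts in $\cL$ cannot be cancelled when taking cohomology of $\Hom_R(\cL,\omega_R)$, because minimality forces the entries of the differentials to lie in $\mm$. The subtle point in the $\ZZ^r$-graded setting is that individual entries may have zero $i$-degree (they may lie in the subring of $R$ generated by variables $X_j$ with $\deg_i X_j=0$), so the coordinate-wise cancellation argument requires care. This is the principal obstacle I expect, and it is to be handled by decomposing the differentials according to the $\ZZ^r$-grading of $R$ and applying the standard minimality argument to the block responsible for the extremal $i$-degree shifts.
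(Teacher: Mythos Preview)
Your strategy—multigraded local duality followed by analysis of $\Hom_R(\cL,\omega_R)$—is exactly what the paper indicates: it gives no detailed argument but refers to \cite[p.~169]{BH} and names multigraded local duality as the one new ingredient, deriving it from the $\ZZ$-graded version by the observation that each $\ZZ$-graded component is a finite direct sum of $\ZZ^r$-graded components. So at the level of method you and the paper agree.

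The gap, however, lies earlier than you place it. The ``one half'' you expect to read off from the shifts—namely the inequality $\sup_k\{\sup_i H^k_\mm(M)+k\}\le\reg_i(M)$—already fails when $\mm$ is the full maximal ideal. Take $M=R=K[x,y]$ with $\deg x=e_1$, $\deg y=e_2$: then $\reg_1 R=0$, while $H^2_\mm(R)$ is supported in bidegrees $(a,b)$ with $a,b\le-1$, so $\sup_1 H^2_\mm(R)+2=1$. Your own bookkeeping exposes the reason: duality introduces the shift $\mathbf a=\sum_j\deg X_j$, whose $i$-th coordinate is $n_i$ (the number of variables of degree $e_i$), whereas the cohomological index runs up to the total number $n$ of variables. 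The subquotient bound therefore yields only
\[
\sup\nolimits_i H^{\,n-j}_\mm(M)+(n-j)\ \le\ \bigl(t_{ij}(M)-j\bigr)+(n-n_i),
\]
and when $n_i<n$ no minimality argument on the differentials can remove the surplus $n-n_i$. Thus the identity as literally stated is incorrect; the intended statement presumably replaces $H^k_\mm$ by $H^k_{\mm_i}$, where $\mm_i$ is the ideal generated by the variables of degree $e_i$, and for that version the classical argument goes through and the cancellation issue you flag disappears. The later uses in the paper are not harmed: Corollary~\ref{Hlinres} is purely $\ZZ$-graded, and Lemma~\ref{change} holds with the corrected formulation by the same invariance-of-local-cohomology argument.
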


For the theorem to make sense, one needs at least that the local cohomology with support in $\mm$ has a natural $\ZZ^r$-graded structure, and this indeed the case as one can see from its description by the \v{C}ech complex. In the $\ZZ$-graded case Theorem \ref{EG}\discuss{Explicit reference?} is due to Eisenbud and Goto \cite{EG}. For the proof of Theorem \ref{EG} one can follow \cite[p.~169]{BH}. The crucial point is multigraded local duality. It can be derived from $\ZZ$-graded local cohomology since the $\ZZ$-graded components of the modules involved are direct sums of \emph{finitely} many multigraded components. Therefore it makes no difference whether one takes graded $\Hom(-,K)$ in the category of $\ZZ$-graded modules or the category of $\ZZ^r$-graded modules.

We are interested in ideals with linear resolutions and for them Theorem \ref{EG} has an obvious consequence:

\begin{corollary}\label{Hlinres}
Let $R$ be standard $\ZZ$-graded polynomial ring over a field $K$, and $I$ an ideal generated in degree $d$ with a linear resolution. Then $\sup H_{\mm}^k(R/I)\le d-1-k$ for all $k$.
\end{corollary}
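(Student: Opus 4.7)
The plan is to deduce the bound directly from Theorem \ref{EG} by first computing $\reg(R/I)$ from the hypothesis. Since $I$ is generated in degree $d$ and has a linear resolution, its minimal graded free resolution has the shape
$$
0 \to R(-d-p)^{\beta_p} \to \cdots \to R(-d-1)^{\beta_1} \to R(-d)^{\beta_0} \to I \to 0,
$$
with $p = \projdim I$. Splicing in the surjection $R \to R/I$ turns this into a minimal graded free resolution of $R/I$ whose $k$-th free module has all shifts equal to $d+k-1$ for $1\le k\le p+1$ and equal to $0$ for $k=0$. Consequently $t_0(R/I)=0$, $t_k(R/I)=d+k-1$ for $k\ge 1$, and
$$
\reg(R/I) \;=\; \max\bigl\{0,\; \sup_{k\ge 1}(d+k-1-k)\bigr\} \;=\; d-1
$$
(the case $d=0$ forces $I=R$ and the statement is vacuous, so I may assume $d\ge 1$).

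With this in hand, I would invoke Theorem \ref{EG} for the module $M=R/I$ in the $\ZZ$-graded setting, which expresses the same regularity via local cohomology:
$$
d-1 \;=\; \reg(R/I) \;=\; \sup_k\bigl\{\sup H_\mm^k(R/I) + k\bigr\}.
$$
Each summand in a supremum is bounded by the supremum itself, so $\sup H_\mm^k(R/I) + k \le d-1$ for every $k$, which is precisely the stated inequality.

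There is no real obstacle: the only bookkeeping required is the degree shift between the resolutions of $I$ and $R/I$, which accounts for the drop from $\reg(I)=d$ to $\reg(R/I)=d-1$, and the observation that $\reg_0$ in the sense of Section \ref{PartReg} specialises to the classical Castelnuovo--Mumford regularity in the single-graded case, so Theorem \ref{EG} may be invoked verbatim.
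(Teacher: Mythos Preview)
Your proof is correct and follows exactly the same line as the paper: the paper's argument is the single observation that $\reg(R/I)=d-1$, after which the bound on $\sup H_\mm^k(R/I)$ is an immediate consequence of Theorem~\ref{EG}. You have simply spelled out the computation of $\reg(R/I)$ from the shape of the resolution and made the final inequality explicit.
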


In fact, if $I$ has a linear resolution, then $\reg R/I=d-1$.

An extremely useful consequence of Theorem \ref{EG} is that it allows change of rings in an easy way.

\begin{lemma}\label{change}
Let $R$ be a standard $\ZZ^r$-graded polynomial ring over the field $K$ and let $x$ be an element of degree $e_i$ for some $i$. Suppose that $x$ is either a nonzerodivisor on the $\ZZ^r$-graded finitely generated module $M$ or annihilates $M$, and let $S=R/(x)$. Then $\reg_i^R(M)=\reg_i^S(M/xM)$.
\end{lemma}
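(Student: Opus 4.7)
The plan is to express both regularities via local cohomology using Theorem \ref{EG} and compare them through the natural exact sequences. First I would reduce to the case where $x$ is itself a variable of $R$: since $x \in R_{e_i}$, which is the finite-dimensional $K$-vector space spanned by the degree-$e_i$ variables, a $K$-linear change of variables in that single piece (preserving both the $\ZZ^r$-grading and the polynomial ring structure) accomplishes this, and then $S=R/(x)$ is again a standard $\ZZ^r$-graded polynomial ring so that Theorem \ref{EG} applies to $\reg_i^S(M/xM)$ as well. Because $x$ annihilates $M/xM$, the \v{C}ech complex on the variables of $R$ applied to $M/xM$ collapses (all $x$-localized terms vanish) to the \v{C}ech complex on the variables of $S$, giving $H^k_{\mm_R}(M/xM)=H^k_{\mm_S}(M/xM)$ as $\ZZ^r$-graded modules. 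In the annihilator case $xM=0$ the same identification applies to $M$ itself, and Theorem \ref{EG} immediately gives $\reg_i^R(M)=\reg_i^S(M)=\reg_i^S(M/xM)$.

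The substantive case is thus when $x$ is a nonzerodivisor on $M$. I would then apply local cohomology to the short exact sequence $0\to M(-e_i)\xrightarrow{x}M\to M/xM\to 0$ to obtain
$$\cdots\to H^{k-1}_\mm(M/xM)\to H^k_\mm(M)(-e_i)\xrightarrow{x}H^k_\mm(M)\to H^k_\mm(M/xM)\to H^{k+1}_\mm(M)(-e_i)\to\cdots,$$
where $\mm=\mm_R$. The upper bound $\reg_i^S(M/xM)\le\reg_i^R(M)$ should drop out by reading off $i$-suprema in this sequence: $H^k_\mm(M/xM)$ sits between a quotient of $H^k_\mm(M)$ and a submodule of $H^{k+1}_\mm(M)(-e_i)$, and since $\sup_i H^{k+1}_\mm(M)(-e_i)=\sup_i H^{k+1}_\mm(M)+1$, one gets $\sup_i H^k_\mm(M/xM)+k\le\reg_i^R(M)$ for every $k$.

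The main obstacle is the reverse inequality, for which I plan a degree-tracking argument. Fix $k$ with $a_k:=\sup_i H^k_\mm(M)$ finite and choose $\beta\in\ZZ^r$ with $\beta_i=a_k$ and $H^k_\mm(M)_\beta\neq 0$. Setting $\alpha=\beta+e_i$, the vanishing $H^k_\mm(M)_\alpha=0$ forces multiplication by $x$ from $H^k_\mm(M)(-e_i)_\alpha=H^k_\mm(M)_\beta$ to $H^k_\mm(M)_\alpha$ to have full kernel, so exactness makes $H^{k-1}_\mm(M/xM)_\alpha$ surject onto the nonzero module $H^k_\mm(M)_\beta$, proving $\sup_i H^{k-1}_\mm(M/xM)\ge a_k+1$ and hence $\reg_i^S(M/xM)\ge a_k+k$. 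Taking the supremum over $k$ yields $\reg_i^S(M/xM)\ge\reg_i^R(M)$. The delicate point is that this step relies essentially on $x$ having degree \emph{exactly} $e_i$ (so the shift is by $1$ in the $i$-coordinate) and on each $H^k_\mm(M)$ having bounded $i$-support; together these let one push a nonzero class one slot up in the $i$-direction and be guaranteed to detect it in $H^{k-1}_\mm(M/xM)$ via the connecting map.
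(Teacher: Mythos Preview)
Your argument is correct, but in the nonzerodivisor case you take a genuinely different route from the paper. The paper disposes of this case in one line by observing that if $\cL$ is a minimal graded free $R$-resolution of $M$, then $\cL\otimes_R S$ is a minimal graded free $S$-resolution of $M/xM$ (because $\Tor_k^R(M,R/(x))=0$ for $k>0$ when $x$ is $M$-regular), with identical shifts; hence $t_{ik}^R(M)=t_{ik}^S(M/xM)$ for all $k$ and the equality of partial regularities is immediate from the definition. Your approach via the long exact local cohomology sequence of $0\to M(-e_i)\to M\to M/xM\to 0$ also works, and the degree-tracking step for the reverse inequality is carried out correctly (the finiteness of $\sup_i H^k_\mm(M)$ that you need follows from graded local duality), but it is considerably more laborious. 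What your route buys is uniformity: you treat both cases through Theorem~\ref{EG}, whereas the paper uses the syzygetic definition for the nonzerodivisor case and local cohomology (invariance under the finite map $R\to S$) only for the annihilator case. The paper's split strategy is shorter overall; your reduction to $x$ being a variable is implicit in the paper as well, since $\reg_i^S$ is only defined when $S$ is again a polynomial ring.
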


\begin{proof}
If $x$ is a nonzerodivisor, then one can simply argue by free resolutions since $\cL\tensor S$ is a minimal graded free $S$-resolution of $M/xM$ if $\cL$ is such an $R$-resolition of $M$. In the second case, in which of course $M/xM=M$, one uses Theorem \ref{EG} and the invariance of local cohomology under finite ring homomorphisms.	
\end{proof}

We need some auxiliary results. The behavior of $\reg_i$ along homogeneous short exact sequences $0\to U\to M\to N\to 0$ is captured by the inequalities
\begin{align*}
\reg_i(M) &\le \max\{\reg_i(U),\reg_i(N)\},\\
\reg_i(U) &\le \max\{\reg_i(M),\reg_i(N)+1\},\\
\reg_i(N) &\le \max\{\reg_i(U)-1,\reg_i(M)\}.
\end{align*}
that follow immediately from the long exact sequence of $\Tor$-modules.

Another very helpful result is the following lemma by Conca and Herzog \cite[Prop.~1.2]{CH}:

\begin{lemma}\label{reg}
Let $R$ be standard $\ZZ$-graded and $M$ be a finitely generated graded $R$-module. Suppose $x\in R$ a homogeneous element of degree $1$ such that $0:_M x$ has finite length. Set $a_{0}=\sup H_{\mm}^0(M)$. Then
$\reg(M)=\max \{ \reg(M/xM), a_{0}\}$.
\end{lemma}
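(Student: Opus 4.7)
The plan is to reduce to the nonzerodivisor case by passing to the quotient $\bar M := M/N$ with $N:=0:_M x$, and then to invoke Lemma \ref{change}. Since $N$ has finite length, every element of $N$ is killed by some power of $\mm$, so $N\subseteq H_\mm^0(M)$; moreover $H_\mm^k(N)=0$ for $k>0$, so Theorem \ref{EG} gives $\reg(N)=\sup N\le a_0$. On $\bar M$ the element $x$ is by construction a nonzerodivisor, and Lemma \ref{change} applied in the $\ZZ$-graded case yields $\reg(\bar M)=\reg(\bar M/x\bar M)$. A direct computation identifies $\bar M/x\bar M = M/(xM+N)$.

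Next I would compare $M/xM$ with $M/(xM+N)$ via the short exact sequence
$$
0\longrightarrow N/(N\cap xM)\longrightarrow M/xM\longrightarrow M/(xM+N)\longrightarrow 0,
$$
whose leftmost term is a quotient of $N$ and therefore has finite length with $\reg(N/(N\cap xM))\le a_0$. Feeding this sequence and the sequence $0\to N\to M\to \bar M\to 0$ into the three regularity inequalities for short exact sequences recalled just before the lemma gives, first,
$$
\reg(\bar M)=\reg(M/(xM+N))\le\max\{a_0-1,\reg(M/xM)\},
$$
and hence
$$
\reg(M)\le\max\{\reg(N),\reg(\bar M)\}\le\max\{a_0,\reg(M/xM)\}.
$$
For the opposite direction, $a_0\le\reg(M)$ holds by Theorem \ref{EG} (applied to the $k=0$ term), and the same two sequences give
$$
\reg(M/xM)\le\max\{a_0,\reg(\bar M)\}\le\max\{a_0,\reg(N)-1,\reg(M)\}\le\reg(M).
$$
Combining the two chains produces the desired equality $\reg(M)=\max\{\reg(M/xM),a_0\}$.

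The only genuinely delicate step is the application of Lemma \ref{change} to $\bar M$: it is precisely the finite-length hypothesis on $0:_M x$ that simultaneously makes $x$ a nonzerodivisor on $\bar M$ and bounds $\reg(N)$ by $a_0$, so that the regularity contributions of the torsion can be absorbed into the $a_0$-term. Everything else is mechanical bookkeeping with the three-term inequalities for $\reg$, and I expect no obstacle beyond tracking which side of each sequence the correction of $\pm 1$ in degree falls on.
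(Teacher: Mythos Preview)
The paper does not supply its own proof of this lemma; it simply attributes it to Conca and Herzog \cite[Prop.~1.2]{CH}. So there is no argument in the paper to compare against, and your proposal must stand on its own.

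There is one genuine gap. You set $N=0:_M x$ and then assert that ``on $\bar M=M/N$ the element $x$ is by construction a nonzerodivisor''. This is not true: if $x\bar m=0$ in $\bar M$ then $xm\in N$, i.e.\ $x^2m=0$, but nothing forces $xm=0$. Concretely, take $R=K[x,y]$ and $M=R\oplus R/(x^2,y)$. Then $0:_M x$ is the one-dimensional span of the class of $x$ in the second summand, so it has finite length; yet $\bar M=R\oplus R/(x,y)$, and $x$ kills the nonzero generator of the second summand. Thus Lemma~\ref{change} cannot be invoked for your $\bar M$.

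The repair is immediate and keeps the rest of your argument intact: take $N=H_\mm^0(M)$ instead of $0:_M x$. Under the hypothesis that $0:_M x$ has finite length one checks that $x$ avoids every associated prime of $M$ other than $\mm$, whence $0:_M x^k\subseteq H_\mm^0(M)$ for all $k$ and in fact $H_\mm^0(M)=0:_M x^k$ for $k\gg 0$. Since $H_\mm^0(M/H_\mm^0(M))=0$, it follows that $x$ is a genuine nonzerodivisor on $\bar M=M/H_\mm^0(M)$, so Lemma~\ref{change} applies. With this choice $\reg(N)=\sup N=a_0$ on the nose, and every subsequent inequality you wrote (the two short exact sequences and the three-term bounds) goes through verbatim. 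Your overall strategy is correct; only the identification of the torsion submodule to quotient out needs to be adjusted.
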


\section{The multi-Rees algebra}\label{ReesAlg}

The natural object that allows us to study all products of the ideals $I_1,\dots,I_w$ in a ring $R$ simultaneously is the \emph{multi-Rees algebra} 
$$
\Rees=R(I_1,\dots,I_w)=R[I_1T_1,\dots,I_wT_w]=\Dirsum_{a\in \ZZ_+^w} I_1^{a_1}\cdots I_w^{a_w}T_1^{a_1}\cdots T_w^{a_w}\subseteq R[T_1,\dots,T_w]
$$
where $T_1,\dots,T_w$ are indeterminates  over $R$. As a shortcut we set
$$
I^a=I_1^{a_1}\cdots I_w^{a_w}\qquad\text{and}\qquad T^a=T_1^{a_1}\cdots T_w^{a_w}.
$$
If $R$ is a standard graded algebra over a field $K$, and each $I_i$ is a graded ideal generated in a single degree $d_i$, then $R(I_1,\dots,I_w)$ carries a natural standard $\ZZ^{w+1}$-grading. In fact let $e_0,\dots.e_w$ denote the elements of the standard basis of $\ZZ^{w+1}$. Then we identify $\deg x$ with $(\deg x)e_0$ for $x\in R$ and set $\deg T_i=-d_ie_0+ e_i$ for $i=1,\dots,w$. Evidently $\Rees$ is then generated over $K$ by its elements whose degree is one of $e_0,\dots,e_w$.

We want to consider $\Rees$ as a residue class ring of a standard $\ZZ^{w+1}$-graded polynomial ring $\cS$ over $K$. To this end we choose a system $f_{i1},\dots,f_{im_i}$ of degree $d_i$ generators of $I_i$ for $i=1,\dots,w$ and indeterminates $Z_{i1},\dots,Z_{im_i}$. Then we set 
$$\
\cS=R[Z_{ij}: i=1,\dots,w,\ j=1,\dots,m_i]
$$
and define $\Phi:\cS\to\Rees$ by the substitution
$$
\Phi|R=\id_R, \qquad \Phi(Z_{ij})=f_{ij}T_i.
$$

We generalize a theorem of Römer \cite{R}, following the simple proof of Herzog, Hibi and Zheng \cite{HHZ}, and complement it by its converse:

\begin{theorem}\label {Tim}
Let $R$ be a standard graded polynomial ring over the field $K$. The family $I_1,\dots,I_w$ of ideals in $R$ has linear products if and only if $\reg_0(R(I_1,\dots,I_w))=0$.
\end{theorem}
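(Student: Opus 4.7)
The plan is to establish, for any finitely generated $\ZZ^{w+1}$-graded $\cS$-module $M$ (with $\cS=R[Z_{ij}]$ the presenting polynomial ring of Section~\ref{ReesAlg}), the key identity
$$
\reg_0^\cS(M) = \sup_{a \in \ZZ_+^w} \reg^R(M_a),
$$
where $M_a$ denotes the $a$-th $\ZZ^w$-graded slice of $M$, regarded as a graded $R$-module via $\deg_0$. Granted this, the theorem follows at once: for $M=\Rees$, the slice $\Rees_a=I^a T^a$ is isomorphic as a graded $R$-module to $I^a(d(a))$ with $d(a)=a_1 d_1 + \dots + a_w d_w$, so $\reg^R(\Rees_a) = \reg(I^a) - d(a)$. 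Since $I^a$ is generated in degree $d(a)$, we have $\reg(I^a) \geq d(a)$, with equality exactly when $I^a$ has a linear resolution. Thus $\reg_0^\cS(\Rees)=0$ is equivalent to every product $I^a$ having a linear resolution.

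For the inequality $\sup_a \reg^R(M_a) \leq \reg_0^\cS(M)$: take a minimal $\ZZ^{w+1}$-graded $\cS$-free resolution $\cL_\bullet$ of $M$ and pass to its $a$-th $\ZZ^w$-graded slice. Since taking a multigraded slice is exact, $(\cL_\bullet)_a$ is a graded $R$-complex resolving $M_a$. Each summand $\cS(-(j,b))_a$ is, as a graded $R$-module, a finite direct sum of copies of $R(-j)$ indexed by the $Z$-monomials of multi-degree $a-b$, so the $\deg_0$-shifts of $(\cL_i)_a$ are all bounded by $t_{0,i}^\cS(M)$. Since $(\cL_\bullet)_a$ is then a (generally non-minimal) graded $R$-free resolution of $M_a$, its shifts dominate the Betti shifts of $M_a$, giving $t_i^R(M_a) \leq t_{0,i}^\cS(M)$ and hence the claimed bound after subtracting $i$ and taking the supremum.

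For the reverse inequality, flat base change along $R \hookrightarrow \cS = R \otimes_K K[Z_{ij}]$ yields $\Tor^\cS_*(K[Z], M) \iso \Tor^R_*(K, M)$, and the Grothendieck change-of-rings spectral sequence for the factorization $K \otimes_\cS - = (K \otimes_{K[Z]} -) \circ (K[Z] \otimes_\cS -)$ reads
$$
E^2_{p,q} = \Tor^{K[Z]}_p(K, \Tor^R_q(K, M)) \Longrightarrow \Tor^\cS_{p+q}(K, M).
$$
Because the $Z_{ij}$'s have $\deg_0=0$, the functors $\Tor^{K[Z]}_p(K,-)$ preserve the $\deg_0$-range, so $\sup\deg_0 E^2_{p,q} = \sup_a t_q^R(M_a)$. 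Convergence gives $t_{0,i}^\cS(M) \leq \max_{q \leq i} \sup_a t_q^R(M_a)$, and taking the supremum over $i$ of $t_{0,i}^\cS(M)-i$ is then bounded by $\sup_a \reg^R(M_a)$. The main obstacle is arranging the multigraded spectral sequence and its $\deg_0$-bounds carefully; alternatively one can filter the double Koszul complex $K(X;\cS) \otimes_\cS K(Z;\cS) \otimes_\cS M$ by $Z$-degree and extract the same bound by direct bookkeeping.
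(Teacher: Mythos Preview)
Your argument is correct, and the forward direction via slicing the $\cS$-resolution is exactly what the paper does. For the converse, however, you take a genuinely different route. The paper proceeds by induction on $\dim R$: one chooses a linear form $z\in R_1$ avoiding all associated primes of all $I^h$ except $\mm_R$ (using West's finiteness result), applies Lemma~\ref{reg} to see that the family $J_i=I_iS$ in $S=R/(z)$ again has linear products, invokes the induction hypothesis on $R(J_1,\dots,J_w)$, and then analyzes the kernel $W$ of $\Rees/z\Rees\to R(J_1,\dots,J_w)$ degree by degree, showing it is concentrated in $0$-degree $0$. Your change-of-rings spectral sequence bypasses all of this and in fact proves the stronger identity $\reg_0^{\cS}(M)=\sup_a\reg^R(M_a)$ for an arbitrary finitely generated multigraded $\cS$-module $M$, not just $\Rees$. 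One dividend worth noting: your identity shows that the supremum on the right is actually attained (it equals the finite number $\reg_0^{\cS}(\Rees)$, and is a supremum of integers), which settles the question raised in Remark~(a) after the theorem. The paper's approach, in return, is elementary in that it avoids spectral sequences, and its intermediate steps (the passage to $S=R/(z)$ and the description of $W$) feed into Remarks~(b) and~(c) on Betti numbers and multi-fiber type. One small point: in your general identity the supremum should run over all $a\in\ZZ^w$, not just $\ZZ_+^w$; this is harmless for $M=\Rees$ since $\Rees_a=0$ for $a\notin\ZZ_+^w$.
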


\begin{proof}
For a graded module $M$ over $\cS$ and $h\in\ZZ^w$ we set
$$
M_{(\bullet,h)}=\Dirsum_{j\in\ZZ} M_{(j,h)}
$$	
where $(j,h)\in\ZZ\times \ZZ^w=\ZZ^{w+1}$. Clearly $M_{(\bullet,h)}$ is an $R$-submodule of $M$. It is the degree $h$ homogeneous component of $M$ under the $\ZZ^w$-grading in which we ignore the $0$-th partial degree of our $\ZZ^{w+1}$-grading.

We apply the operator $(\bullet,h)$ to the whole free resolution $\cL$ of $M=\Rees$ and obtain the exact sequence
$$
\cL_{(\bullet,h)}:\ 0\to \Dirsum_{g\in\ZZ} \cS(-g)_{(\bullet,h)}^{\beta_{pg}}\to \dots\to \Dirsum_{g\in\ZZ} \cS(-g)_{(\bullet,h)}^{\beta_{0g}}\to \Rees_{(\bullet,h)}\to 0.
$$
Since the $R$-modules $\cS(-g)_{(\bullet,h)}$ are free over $R$, we obtain a graded (not necessarily minimal) free resolution of $\cS_{(\bullet,h)}$ over $R$. The rest of the proof of this implication is careful bookkeeping of shifts.

First, as an $R$-module,
$$
\Rees_{(\bullet,h)}=I^hT^h\cong I^h(d\cdot h), \qquad d\cdot h=\sum d_ih_i.
$$
Moreover, $\cS(-g)_{(\bullet,h)}$ is a free module over $R$ with basis elements in degree $g_0$ since the indeterminates $Z_{ij}$ have degree $0$ with respect to $\deg_0$.

Now suppose that $\reg_0(\Rees)=0$. Then $g_{0k}\le k$ for all $k$, and we see that\discuss{CORRECTION}
$$
\reg I^h(d\cdot h)=\reg_0(I^h T^h)=0,
$$
and so $\reg(I^h)=d\cdot h$ as desired.

The converse is proved by induction on $\dim R$. For $\dim R=0$ there is nothing to show. In preparing the induction step we first note that there is no restriction in assuming that the ground field $K$ is infinite. Next we use that there are only finitely many prime ideals in $R$ that are associated to any of the products $I^h$ (West \cite[Lemma~3.2]{W}). Therefore we can find a linear form $z\in R_1$ that is not contained in any associated prime ideal of any $I^h$ different from $\mm_R$. In other words, $I^h:z$ has finite length for all $h$.

Set $S=R/(z)$. It is again a standard graded polynomial ring (after a change of coordinates). Furthermore let $J_i=I_iS=(I_i+z)/(z)$. Then $(I^h+(z))/(z)=J^h$. Next we want to compute $\reg^S(S/J^h)=\reg^R(S/J^h)$. The hypotheses of Lemma \ref{reg} are satisfied for $M=R/I^h$. Since $M/z M=S/J^h$ we obtain
$$
d\cdot h-1=\reg^R(R/I^h)=\max(\reg^R(S/J^h,\sup H_\mm^0(R/I^h)).
$$

Unless we are in the trivial case $J^h=0$, we have $\reg^S(S/J^h)=\reg^R(S/J^h) \ge d\cdot h-1$ anyway. Thus $\reg^S(S/J^h)=d\cdot h-1$, and therefore $S/J^h$ has a linear resolution over $S$. This allows us to apply the induction hypothesis to the family $J_1,\dots,J_w$, which appears in the exact sequence
$$
0\to W \to \Rees/z\Rees \to S(J_1,\dots,J_w)\to 0.
$$
In fact, by induction we conclude that $\reg_0 (S(J_1,\dots,J_w))=0$ over its representing polynomial ring, namely $\cS/(z)$. But because of Lemma \ref{change} we can measure $\reg_0$ over $\cS$ as well. Moreover, $\reg_0 \Rees/z\Rees=\reg_0 \Rees$.

It view of the behavior of $\reg_0$ along short exact sequences, it remains to show that $\reg_0 W=0$. As an $R$-module, $W$ splits into its $\ZZ^w$-graded components $W_h=W_h'T^h$ where
$$
W_h'=(I^h\cap (z))/z I^h=z(I^h:z)/z I^h.
$$
We claim that $W_h'$ is concentrated in degree $d\cdot h$, and therefore annihilated by $\mm_R$. Since it is a subquotient of $I^h$, it cannot have nonzero elements in degree $<d\cdot h$. On the other hand, the description $W_h'=z(I^h:z)/z I^h$ shows that the degree of the elements in $W$ is bounded above by $1+\sup (I^h:z)/I^h$. By the choice of $z$, this module is contained in $H_{\mm_R}^0(R/I^h)$ whose degrees are bounded by $d\cdot h-1$ because of Corollary \ref{Hlinres}. Multiplication by $T^h$ moves $W_h'$ into degree $0$ with respect to $\deg_0$ in $\Rees$.
 
Recombining the $W_h$ to $W$, we see that $W$ is generated as an $\cS$-module by elements of $0$-degree $0$. Moreover, it is annihilated by $\mm_R\cS$, and therefore a module over the polynomial ring $\cS'=\cS/\mm_R\cS$ on which the $0$-degree of $\cS$ vanishes. Hence the $0$-regularity of an $\cS'$-module $M$ is the maximum of $\deg_0 x$ where $x$ varies over a minimal system of generators of $M$. But, as observed above, $W$ is generated in $0$-degree $0$.
\end{proof}

\begin{remark}
(a) The first part of the proof actually shows that $\reg(I^h)\le d\cdot h +\reg_0(\Rees)$ for every $h$, as stated by Römer \cite[Thm.~5.3]{R} \discuss{added: single ideal} for a single ideal. It seems to be unknown whether there always exists at least one exponent $h$ for which  $\reg(I^h)= d\cdot h +\reg_0(\Rees)$. By virtue of  Theorem \ref{Tim}  this is the case if  $\reg_0(\Rees)\leq 1$.

(b) One can show that  the Betti number of the ideals $J^h$ over $S$ (notation as in the proof) are determined by those of the ideal $I^h$ over $R$. In the case of the powers of a single ideal this has been proved in \cite[Lemma~2.4]{BCV}.

(c) An object naturally associated with the multi-Rees algebra is the \emph{multi-fiber ring} 
$$
F(I_1,\dots,I_w)=R(I_1,\dots,I_w)/\mm_R R(I_1,\dots,I_w).
$$
If each of the ideals $I_i$ is generated in a single degree, then  $F(I_1,\dots,I_w)$ can be identified with the $K$-subalgebra of $R(I_1,\dots,I_w)$ that is generated by the elements of degrees $e_1,\dots,e_w$. As such, it is a retract of $R(I_1,\dots,I_w)$, and since the ideals of interest in this paper are generated in a single degree, we will always consider the multi-fiber ring as a retract of the multi-Rees algebra in the following.

 An ideal $I\subseteq R$ that is generated by the elements $f_1,\dots,f_m$ of constant degree is said to be of \emph{fiber type} if the defining ideal of the Rees algebra $R(I)$ is generated by polynomials that are either 
(1)   linear in the indeterminates representing the generators $f_iT$ of the Rees algebra (and therefore are relations\discuss{Should we add this?} of the symmetric algebra), or
(2) belong to the definining ideal of $F(I)$. One can immediately generalize this notion and speak of a family $I_1,\dots,I_w$ of \emph{multi-fiber type}. In the situation of the theorem it follows that $J_1,\dots,J_w$ is of multi-fiber type if $I_1,\dots,I_w$ has this property. 
\end{remark}

More generally than in Theorem \ref{Tim} one can consider arbitrary standard $\ZZ^r$-graded $K$-algebras. For them Blum \cite{B} has proved an interesting result. Note that a standard $\ZZ^r$-graded $K$-algebra is a standard $\ZZ$-graded $K$-algebra in a natural way, namely with respect to the total degree, the sum of the partial degrees $\deg_i$, $i=1,\dots,r$. Therefore it makes sense to discuss properties of a standard $\ZZ$-graded algebra in the standard $\ZZ^r$-graded case.

\begin{theorem}\label {Blum}
Suppose the standard $\ZZ^r$-graded algebra $R$ is a Koszul ring. Furthermore let $a,b\in \ZZ_+^n$ and consider the module
$$
M_{(a,b)} =\Dirsum _{k=0}^\infty R_{ka+b}
$$
over the ``diagonal'' subalgebra
$$
R_{(a)} = \Dirsum _{k=0}^\infty R_{ka}.
$$
Then $R_{(a)}$ is a standard graded $K$-algebra with $(R_{(a)})_k=R_{ka}$, and $M_{(a,b)}$ has a (not necessarily\discuss{Added} finite) linear resolution over it.
\end{theorem}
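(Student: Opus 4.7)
The proof naturally splits into two tasks: checking that $R_{(a)}$ is standard $\ZZ$-graded, and then exhibiting a linear $R_{(a)}$-resolution of $M_{(a,b)}$.

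For the first task, I would show that Koszulness of $R$ implies the multiplication maps $R_a \otimes_K R_{ka} \to R_{(k+1)a}$ are surjective for every $k \geq 0$, which is exactly what is needed for $R_{(a)}$ to be generated as a $K$-algebra by $(R_{(a)})_1 = R_a$. By the definition of Koszulness, $\Tor^R_i(K,K)$ is concentrated in total degree $i$; in particular $\Tor^R_1(K,K)_j = 0$ for $j \neq 1$. Writing $K = R/\mm_R$ and looking at the minimal graded $R$-free resolution of $K$, this vanishing in total degrees $\geq 2$ yields, by a straightforward induction on $k$, that every element of $R_{ka}$ is a $K$-linear combination of products of elements in $R_a$.

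For the second task, the plan is to produce a linear resolution of $M_{(a,b)}$ over $R_{(a)}$ by taking the appropriate ``diagonal strand'' of the minimal graded free resolution of $K$ over $R$. Let $\cL$ be such a resolution; by Koszulness, $\cL_i = \Dirsum_g R(-g)^{\beta_{i,g}}$ with $\beta_{i,g} \neq 0 \Rightarrow |g| = i$, where $|g|$ denotes the sum of the coordinates. One would form a complex of $R_{(a)}$-modules whose $i$-th piece extracts, from $\cL_i$, the graded pieces living in degrees of the form $ka + b$. Using the Koszul property together with a spectral-sequence or explicit double-complex argument (comparing the bar resolution of $K$ over $R_{(a)}$ to the diagonal of the bar resolution of $K$ over $R$), one checks that this diagonal strand is exact and resolves $M_{(a,b)}$. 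The point is that ``linear in the total degree on $R$'' becomes, after the diagonal operation, ``linear in the standard $\ZZ$-grading on $R_{(a)}$,'' so the resulting resolution is linear; equivalently, $\Tor^{R_{(a)}}_i(K, M_{(a,b)})$ is concentrated in a single $R_{(a)}$-degree for every $i$.

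The main obstacle is the step comparing $\Tor$ over $R$ to $\Tor$ over $R_{(a)}$: the inclusion $R_{(a)} \hookrightarrow R$ is in general neither flat nor finite, so no direct change-of-rings formula applies. The entire technical core lies in showing that the ``diagonal'' of a linear-in-total-degree $R$-resolution is genuinely exact as a sequence of $R_{(a)}$-modules, and this is where the Koszul hypothesis on $R$ is essential. The shift by $b$ is largely bookkeeping: once the case $b=0$ is understood, treating arbitrary $b \in \ZZ_+^r$ amounts to tracking how a fixed shift interacts with the total-degree stratification of $\cL$. Finally, note that the resolution produced in this way need not be finite, reflecting the possibility that $M_{(a,b)}$ can have infinite projective dimension over $R_{(a)}$, in accordance with the statement of the theorem.
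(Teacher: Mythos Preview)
Your overall shape is close to the paper's sketch (apply a degree-selection functor to the linear resolution coming from Koszulness of $R$), but you misidentify the actual difficulty, and this is a genuine gap.

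First, a minor point: the fact that $R_{(a)}$ is standard graded does not use Koszulness. Since $R$ is a standard $\ZZ^r$-graded $K$-algebra, every element of $R_{ka}$ is already a $K$-linear combination of products of $ka_i$ elements from $R_{e_i}$ ($i=1,\dots,r$), and regrouping these into $k$ blocks of multidegree $a$ shows $R_{ka}=(R_a)^k$. Your $\Tor_1$ argument only recovers that $R$ is generated in total degree $1$, which was assumed.

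The real issue is in your second task. You write that the hard part is checking that the ``diagonal strand'' of the $R$-resolution is \emph{exact} over $R_{(a)}$. It is not: the degree-selection functor (restricting to the graded components in $\{ka+b:k\ge 0\}$) is exact on $\ZZ^r$-graded $R$-modules, so exactness of the strand is automatic. What fails is \emph{freeness}. Applying degree selection to a free summand $R(-g)$ yields $\bigoplus_{k\ge 0} R_{ka+b-g}$, which is an $R_{(a)}$-module of the same type $M_{(a,c)}$ for an appropriate $c$, but is in general \emph{not} free over $R_{(a)}$. Thus the complex you obtain is exact but is not a free resolution, and your sentence ``linear in the total degree on $R$ becomes linear in the standard $\ZZ$-grading on $R_{(a)}$'' does not apply to it directly.

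This is exactly where the paper (following Blum) does the work you are missing: one starts from the linear $R$-resolution of $\mm$, applies degree selection, and obtains an exact complex of $R_{(a)}$-modules whose terms are direct sums of modules $M_{(a,c)}$ with $|c|<|b|$. An induction on $|b|$ then shows these terms themselves have linear $R_{(a)}$-resolutions, and from these bounds on shifts one deduces that $M_{(a,b)}$ has a linear resolution. Note also that the case $b=0$ is trivial ($M_{(a,0)}=R_{(a)}$), not the base of understanding as you suggest; the induction runs on $|b|$ and the content is entirely in the step. Your reference to a spectral-sequence/double-complex comparison is in the right spirit, but as written it is aimed at the wrong obstacle.
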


Blum states this theorem only in the bigraded case, but remarks himself that it can easily be generalized to the multigraded case. Let us sketch a slightly simplified version of his proof for the general case. Let $\mm$ be the ideal generated by all elements of total degree $1$. The ``degree selection functor'' that takes the direct sum of the $\ZZ^r$-graded components whose degrees belong to a subset of $\ZZ^r$ is exact. There is nothing to prove for $M_{(a,b)}$ if $b=0$. If $b\neq0$, then the degree selection functor for the subset $\{ka+b: k\ge 0\}$ cuts out an exact complex of $R_{(a)}$-modules from the resolution of $\mm$, which, by hypothesis on $R$, is linear over $R$. The modules occurring in it are not necessarily free over $R_{(a)}$, but they are under control, and an involved induction allows one to bound the shifts in their resolutions. These bounds in their turn imply that $M_{(a,b)}$ has a linear resolution over $R_{(a)}$. 

We highlight the following  special case of Theorem \ref{Blum}:  

\begin{theorem}\label {Blum-special} 
Let $R=K[X_1,\dots, X_n]$   and $I_1,\dots,I_w$ ideals of $R$ such that $R(I_1,\dots,I_w)$ is Koszul. Then the family $I_1,\dots,I_w$ has linear  products.
\end{theorem}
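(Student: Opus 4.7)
The plan is to apply Theorem \ref{Blum} directly to the multi-Rees algebra $\Rees=R(I_1,\dots,I_w)$ for a well-chosen pair $(a,b)$. Recall from Section \ref{ReesAlg} that $\Rees$ carries a natural standard $\ZZ^{w+1}$-grading in which the variables $X_1,\dots,X_n$ of $R$ have degree $e_0$ and each generator $f_{ij}T_i$ of the $i$-th Rees component has degree $e_i$. The Koszul hypothesis is imposed on $\Rees$ viewed as a standard $\ZZ$-graded algebra via total degree, so Theorem \ref{Blum} is available.

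Fix an arbitrary $h=(h_1,\dots,h_w)\in\ZZ_+^w$ and take $a=e_0$ and $b=(0,h)$ in $\ZZ_+^{w+1}$. Then
$$
\Rees_{(a)}=\Dirsum_{k\ge 0}\Rees_{(k,0)}=\Dirsum_{k\ge 0}R_k=R,
$$
so the diagonal subalgebra coincides with the original polynomial ring, while
$$
M_{(a,b)}=\Dirsum_{k\ge 0}\Rees_{(k,h)}=\Dirsum_{k\ge 0}(I^h)_{k+d\cdot h}\,T^h
$$
is canonically identified with $I^h(d\cdot h)$ as a graded $R$-module. Since $I^h$ is generated in degree $d\cdot h$, this module is generated in degree $0$ with respect to the $\Rees_{(a)}$-grading.

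By Theorem \ref{Blum}, $M_{(a,b)}$ admits a linear resolution over $\Rees_{(a)}=R$. Translating the shift back, this says that the Betti numbers of $I^h$ over $R$ are concentrated on the diagonal $j=i+d\cdot h$, which is exactly the statement that $I^h$ has a linear resolution. Since $h\in\ZZ_+^w$ was arbitrary, the family $\{I_1,\dots,I_w\}$ has linear products.

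The argument is essentially bookkeeping once Theorem \ref{Blum} is in hand; the main conceptual point is to observe that the $\ZZ^{w+1}$-grading of $\Rees$ isolates each product $I^h$ as a single shifted diagonal module over the subalgebra $\Rees_{(e_0)}=R$, so the Koszul property of $\Rees$ propagates uniformly to linearity of the resolutions of all the $I^h$. I expect no serious obstacle beyond correctly matching the two gradings and verifying that $\Rees_{(e_0)}=R$ in the standard sense.
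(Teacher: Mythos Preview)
Your proof is correct and follows exactly the same route as the paper: apply Theorem \ref{Blum} to $\Rees=R(I_1,\dots,I_w)$ with $a=e_0$ and $b=(0,h)$, identify $\Rees_{(e_0)}$ with $R$ and $M_{(e_0,(0,h))}$ with $I^hT^h\cong I^h(d\cdot h)$, and read off the linear resolution. Your write-up simply expands the bookkeeping that the paper leaves implicit.
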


\begin{proof} Note that  $R$ is the diagonal subalgebra for $a=e_0$ of  $R(I_1,\dots,I_w)$  and one obtains $I^hT^h$ as a module over it for $b=(0,h)$.
\end{proof} 

\begin{remark} (a) In \cite[Ex.~2.6]{BCV} we give an example of a monomial ideal $I$ with linear powers that is not of fiber type and whose Rees algebra is not Koszul. But even linear powers and fiber type together do not imply that the Rees algebra is Koszul: see \cite[Ex.~2.7]{BCV}.
	
(b) For the ``if'' part of Theorem \ref{Tim} and for Theorem \ref{Blum} one can replace the polynomial base ring $R$ by a Koszul ring. This generalization allows relative versions in the following sense: if $R(I,J)$ is Koszul, then $R(I)$ is Koszul because retracts of Koszul rings are Koszul; since $R(I,J)=(R(I))(J) $, one obtains that $JR(I)$ has a linear resolution over $R(I)$.
\end{remark}

\section{Initial ideals and initial algebras}\label {initial}

We start again from a standard graded polynomial ring $R=K[X_1,\dots,X_n]$. It is a familiar technique to compare an ideal $J$ to its initial ideal $\ini_<(J)$ for a monomial order $<$ on $R$. The initial ideal is generated by monomials and therefore amenable to combinatorial methods. Homological properties like being Cohen-Macaulay or Gorenstein or enumerative data like the Hilbert series (if $I$ is homogeneous) can be transferred from $R/\ini_<(J)$ to $R/J$, and for others, like regularity, the value of $R/\ini_<(J)$ bounds that of $R/J$.

Let us formulate a criterion that will allow us to apply Theorem \ref{Tim} or even Theorem \ref{Blum} in order to conclude that a family of ideals has linear products. We use the presentation of the multi-Rees algebra $\Rees=R(I_1,\dots,I_w)$ as a residue class ring of a polynomial ring $\cS$ that has been introduced above Theorem \ref{Tim}.

\begin{theorem}
Let $\cI=\Ker\Phi$  be defining ideal of $\Rees$ as a residue class ring of the polynomial ring $\cS$, and let $\prec$ be a monomial order on $\cS$. Let $G$ be the minimal set of generators of the monomial ideal $\ini_\prec(\cI)$. 
\begin{enumerate}
\item If every element of $G$ is at most linear in the indeterminates $X_1,\dots,X_n$ of $R$, then $\reg_0(\Rees)=0$, and $I_1,\dots,I_w$ has linear products.

\item If $G$ consists of quadratic monomials, then $\reg_0(\Rees)=0$, $\Rees$ is a Koszul algebra, and $I_1,\dots,I_w$ has linear products.
\end{enumerate}
\end{theorem}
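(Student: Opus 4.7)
The plan is to exploit the flat Gröbner degeneration from $\Rees=\cS/\cI$ to $\cS/\ini_\prec(\cI)$. Since $\cI$ is $\ZZ^{w+1}$-homogeneous on $\cS$, its initial ideal is $\ZZ^{w+1}$-homogeneous too, and the standard one-parameter flat family preserves the grading. Upper-semicontinuity of multigraded Betti numbers then gives
\[
\reg_i(\cS/\cI)\le \reg_i(\cS/\ini_\prec(\cI))\qquad\text{for every coordinate }i,
\]
which I will invoke with $i=0$ for the regularity conclusions in both parts.

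For (1), I would bound $\reg_0(\cS/\ini_\prec(\cI))$ by $0$ using the Taylor complex on the minimal generating set $G$. The Taylor complex is a (generally non-minimal) $\ZZ^{w+1}$-graded free resolution of $\cS/\ini_\prec(\cI)$ whose shifts at homological position $k\ge 1$ are least common multiples of $k$ elements of $G$. Since lcm takes the maximum exponent variable by variable and each $m\in G$ satisfies $\deg_0(m)\le 1$ by hypothesis, every shift has $\deg_0\le 1$. The minimal resolution of a monomial ideal has shifts that form a subset of the Taylor shifts, so $t_{0k}(\cS/\ini_\prec(\cI))\le 1$ for $k\ge 1$ and $t_{00}=0$, giving $\reg_0(\cS/\ini_\prec(\cI))=0$. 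Combined with the deformation inequality this forces $\reg_0(\Rees)=0$, and Theorem \ref{Tim} delivers the linear products.

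For (2), I need both Koszulness of $\Rees$ and $\reg_0(\Rees)=0$. The Koszul conclusion follows the familiar Gröbner route: a quadratic monomial ideal $\ini_\prec(\cI)$ makes $\cS/\ini_\prec(\cI)$ Koszul by Fröberg's theorem, and Koszulness lifts through the flat degeneration to $\cS/\cI=\Rees$ (as recorded in \cite{BC2}); Theorem \ref{Blum-special} then yields the linear products. For the regularity conclusion, the key observation is that no pure monomial in the $X$-variables can be the leading term of an element of $\cI$: if $p\in\cS$ had lead term $X^{\alpha}$, then the $R$-graded component of $\Phi(p)$ would be the nonzero polynomial obtained by summing the pure-$X$ monomials of $p$ (the remaining terms live in $\bigoplus_{a\neq 0} I^a T^a$), contradicting $p\in\cI$. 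Hence every quadratic element of $G$ involves at least one $Z_{ij}$ and so has $\deg_0\le 1$, which reduces (2) to (1) and yields $\reg_0(\Rees)=0$.

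The main technical obstacle I anticipate is the careful verification of the multigraded Gröbner flat family: one has to ensure that the homogenization of a reduced Gröbner basis for $\prec$ is compatible with the $\ZZ^{w+1}$-grading so that Betti-number upper-semicontinuity can be applied coordinatewise. This is standard but has to be stated precisely, and the same multigraded care is needed to transfer Koszulness from $\cS/\ini_\prec(\cI)$ to $\cS/\cI$ in part (2). Everything else is routine once this deformation is in place.
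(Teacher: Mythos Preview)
Your overall strategy matches the paper's: pass to $\cS/\ini_\prec(\cI)$, bound $\reg_0$ there via the Taylor complex, and then lift. Part (2) is fine and is exactly the paper's argument, including the observation that a quadratic initial monomial cannot be pure in the $X$-variables (so (2) reduces to (1)) and the Fr\"oberg-plus-deformation route to Koszulness.

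However, part (1) contains a genuine error in the Taylor-complex step. You write that since $\deg_0(m)\le 1$ for each $m\in G$ and ``lcm takes the maximum exponent variable by variable'', every Taylor shift has $\deg_0\le 1$, hence $t_{0k}\le 1$. This is false: $\deg_0$ is the \emph{total} degree in the $X$-variables, not the exponent of a single variable. If $g_1=X_1Z_{11}$ and $g_2=X_2Z_{12}$, then $\operatorname{lcm}(g_1,g_2)=X_1X_2Z_{11}Z_{12}$ has $\deg_0=2$; more generally, at homological position $k$ the lcm of $k$ generators with distinct $X$-factors has $\deg_0=k$. So the bound $t_{0k}\le 1$ is simply wrong.

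The correct bound, and the one the paper uses, is $t_{0k}\le k$. The cleanest way to see this is to look at the \emph{entries} of the Taylor differentials: the entry associated to dropping $m_{i_j}$ from $\{m_{i_1},\dots,m_{i_k}\}$ is $\pm\operatorname{lcm}(m_{i_1},\dots,m_{i_k})/\operatorname{lcm}(m_{i_1},\dots,\widehat{m_{i_j}},\dots,m_{i_k})$, which divides $m_{i_j}$ and hence has $\deg_0\le 1$. Thus every matrix in the Taylor complex is at most linear in the $X$-variables, the shift in $\deg_0$ increases by at most $1$ at each step, and $t_{0k}\le k$, giving $\reg_0(\cS/\ini_\prec(\cI))=0$ as desired. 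Once you replace your lcm claim by this entry-wise argument (or, equivalently, by the inequality $\deg_0\bigl(\operatorname{lcm}(m_{i_1},\dots,m_{i_k})\bigr)\le k$), the proof goes through and coincides with the paper's.
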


\begin{proof}
For the first statement (observed in \cite{HHZ}) it is enough to show that $\reg_0(\cS/\ini_\prec(\cI))=0$. Then we obtain $\reg_0(\Rees)=0$ as well and can apply Theorem \ref{Tim}. In order to estimate the maximal shifts in a minimal free resolution of $\cS/\ini_\prec(\cI)$ it is sufficient to estimate them in an arbitrary free resolution, and such is given by the Taylor resolution. All its matrices are have entries that are at most linear in the variables $X_1,\dots,X_n$ of $R$.

If $G$ consists of quadratic monomials, then $\cS/\ini_\prec(\cI)$ is Koszul by a theorem of Fröberg, and Koszulness of $\cS/\ini_\prec(\cI)$ implies the Koszulness of $\Rees=\cS/\cI$. In its turn this implies that $I_1,\dots,I_w$ has linear powers by Theorem \ref{Blum-special}, and then we obtain $\reg_0(\Rees)=0$ by Theorem \ref{Tim}.

However, we can circumvent Theorem \ref{Blum-special}. Namely, if $G$ consists of quadratic monomials, then it is impossible that any of them is of degree $\ge 2$ in $X_1,\dots,X_n$. In this multigraded situation this would imply that $\cI$ contains a nonzero element of $R$, and this is evidently impossible.   	
\end{proof}

In the previous theorem we have replaced the defining ideal of the multi-Rees algebra by a monomial object, namely its initial ideal. It is often useful   
to replace the multi-Rees algebra by a monomial algebra. In the following $<$ denotes a monomial order on $R$. We are interested in the products $I^h=I_1^{h_1}\cdots I_w^{h_w}$. There is an obvious inclusion, namely
\begin{equation}
\ini(I_1)^{h_1}\cdots \ini(I_w)^{h_w}\subseteq\ini(I^h),\label {ini_incl_id}
\end{equation}
and it is an immediate question whether one has equality for all $h$. Let us introduce the notation $\ini(I)^h$ for the left hand side of the inclusion.

To bring the multi-Rees algebra $\Rees=R(I_1,\dots,I_w)$ into the play we extend the monomial order from $R$ to $R[T_1,\dots,T_w]$ in an arbitrary way. There are two natural initial objects, namely the initial subalgebra $\ini(\Rees)$ on the one side and the multi-Rees algebra
$$
\Rees_{\ini}=R(\ini(I_1),\dots,\ini(I_w))
$$
on the other. Since $\Rees$ is multigraded in the variables $T_1,\dots,T_w$, $\ini(\Rees)$ does not depend on the extension of $<$ to $R[T_1,\dots,T_w]$, and we have
\begin{equation}
\Rees_{\ini}\subseteq\ini(\Rees). \label {ini_incl_rees}
\end{equation}
Equality in \eqref{ini_incl_id} for all $h$ is equivalent to equality in \eqref{ini_incl_rees}. Whether equality holds is a special case of the question whether polynomials $f_1,\dots,f_m$ form a Sagbi basis of the algebra $A=K[f_1,\dots,f_m]$ they generate. By definition, this means that the initial monomials $\ini(f_1),\dots,\ini(f_m)$ generate the initial algebra $\ini(A)$. Similarly to the Buchberger criterion for ordinary Gröbner bases there is a lifting criterion for syzygies. In this case the syzygies are polynomial equations.

We choose a polynomial ring $S=K[Z_1,\dots,Z_n]$. Then the substitution
$$
\Phi(Z_i)=f_i,\qquad i=1,\dots,m.
$$
makes $A$ a residue class ring of $S$. Simultaneously we can consider the substitution
$$
\Psi(Z_i)=\ini(f_i), \qquad i=1,\dots,m.
$$
Roughly speaking, one has $\ini(A)=K[\ini(f_1),\dots,\ini(f_m)]$ if and only if every element of $\Ker\Psi$ can be lifted to an element of $\Ker\Phi$. More precisely:

\begin{theorem}\label {lift}
With the notation introduced, let $B$ be a set of binomials in $S$ that generate $\Ker \Psi$. Then the following are equivalent:
\begin{enumerate}
\item $\ini(K[f_1,\dots,f_m])=K[\ini(F_1),\dots,\ini(f_m)]$;

\item For every $b\in B$ there exist monomials $\mu_1,\dots,\mu_q\in S$ and coefficients $\lambda_1,\dots\lambda_q\in K$, $q\ge 0$, such that
\begin{equation}
b-\sum_{i=1}^q \lambda_i\mu_i\in \Ker\Phi\label {liftedB}
\end{equation}
and $\ini(\Phi(\mu_i))\le \ini(\Phi(b))$ for $i=1,\dots,q$.
\end{enumerate}	
Moreover, in this case, the  elements in \eqref{liftedB} generate $\Ker\Phi$. 
\end{theorem}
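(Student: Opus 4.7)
The statement is the Sagbi-basis analog of Buchberger's criterion for Gr\"obner bases: condition (2) asks that the binomial relations among the $\ini(f_i)$ lift to syzygies among the $f_i$ with the expected $\ini$-bound, and the conclusion is that the $\ini(f_i)$ generate the initial algebra. My plan is to prove (1)$\Rightarrow$(2) by a direct Sagbi-reduction of $\Phi(b)$, and (2)$\Rightarrow$(1) by a minimality argument that forces any top-level cancellation in a preimage of $g\in A$ to be accounted for by an element of $\Ker\Psi$, which by hypothesis lifts back to $\Ker\Phi$. The moreover statement then follows by a parallel reduction applied to arbitrary elements of $\Ker\Phi$. Without loss of generality I assume the $f_i$ are monic, so that $\Psi(Z^\alpha)$ really is the leading term (not merely the leading monomial up to scalar) of $\Phi(Z^\alpha)$.

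For (1)$\Rightarrow$(2), fix $b\in B$. Since $\Psi(b)=0$, the two monomials of $b$ have equal $\Psi$-image and their leading contributions to $\Phi(b)$ cancel, so $\ini(\Phi(b))$ is strictly smaller than that common monomial. Because $\Phi(b)\in A$, hypothesis (1) gives $\ini(\Phi(b))=\Psi(Z^{\alpha_1})$ for some monomial $Z^{\alpha_1}\in S$. Subtracting the appropriate scalar multiple of $\Phi(Z^{\alpha_1})$ from $\Phi(b)$ strictly decreases its initial monomial; iterating and using that a monomial order is a well-ordering, one terminates after finitely many steps with $\Phi(b)=\sum_i\lambda_i\Phi(\mu_i)$, each $\ini(\Phi(\mu_i))\leq\ini(\Phi(b))$. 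Equivalently, $b-\sum_i\lambda_i\mu_i\in\Ker\Phi$.

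For (2)$\Rightarrow$(1), given $g\in A$, choose $F\in\Phi^{-1}(g)$ minimizing $m(F):=\max\{\Psi(Z^\alpha):Z^\alpha\in\operatorname{supp}(F)\}$, possible because monomial orders are well-orderings. I claim $\ini(g)=m(F)$, which places $\ini(g)\in K[\ini(f_1),\dots,\ini(f_m)]$. Suppose instead $\ini(g)<m(F)$. Any $\Phi(Z^\alpha)$ with $\Psi(Z^\alpha)<m(F)$ has all its terms strictly below $m(F)$, so the $m(F)$-coefficient of $g=\Phi(F)$ is $\sum_{\Psi(Z^\alpha)=m(F)}c_\alpha$, which must be $0$; hence $F_0:=\sum_{\Psi(Z^\alpha)=m(F)}c_\alpha Z^\alpha$ lies in $\Ker\Psi$. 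Grade $S$ by $\deg(Z_i):=\deg\ini(f_i)$; then $\Ker\Psi$ is homogeneous in this grading and $F_0$ is homogeneous, so one can write $F_0=\sum_k\nu_k b_{j_k}$ with each $\nu_k b_{j_k}$ itself homogeneous of the same multidegree --- equivalently, every monomial appearing in it has $\Psi$-image exactly $m(F)$. Replacing each $b_{j_k}$ by its lifting $\sum_i\lambda_i^{(k)}\mu_i^{(k)}$ modulo $\Ker\Phi$ yields a new preimage $F'$ of $g$, and the estimate $\Psi(\nu_k\mu_i^{(k)})=\Psi(\nu_k)\Psi(\mu_i^{(k)})\leq\Psi(\nu_k)\ini(\Phi(b_{j_k}))<\Psi(\nu_k)m_{b_{j_k}}=m(F)$ forces $m(F')<m(F)$, contradicting minimality.

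The moreover statement is proved by the same mechanism. Let $K'\subseteq\Ker\Phi$ be the ideal generated by the fixed liftings $b-\sum_i\lambda_i\mu_i$, take $H\in\Ker\Phi$, and consider the top part $H_0:=\sum_{\Psi(Z^\alpha)=m(H)}c_\alpha Z^\alpha$; the cancellation argument above again places $H_0\in\Ker\Psi$. Expanding $H_0$ along $B$ and replacing each generator by its lifting modulo $K'$ strictly decreases $m(H)$ while preserving $H\in\Ker\Phi$ and its class modulo $K'$. Since $m(H)=1$ forces $H$ to be a scalar, hence $0$ inside $\Ker\Phi$, iteration terminates with $H\in K'$. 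The principal obstacle throughout is the homogeneity bookkeeping in (2)$\Rightarrow$(1) --- arranging that the decomposition of $F_0$ along $B$ has each summand pure of $\Psi$-multidegree $m(F)$ --- but this is delivered by the fact that $B$ consists of binomials, hence homogeneous elements of $\Ker\Psi$.
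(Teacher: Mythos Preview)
The paper does not give its own proof of this theorem: it simply cites Robbiano--Sweedler \cite{RS} and Conca--Herzog--Valla \cite{CHV}, adding only the parenthetical remark that $\ini(\Phi(b))<\ini(\Phi(\nu_1))=\ini(\Phi(\nu_2))$ for $b=\nu_1-\nu_2$. So there is no in-paper argument to compare against; your task reduces to whether your proof stands on its own.

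It does. Your argument is the standard subduction (Sagbi-reduction) proof, and the steps are sound. For (1)$\Rightarrow$(2) you run Sagbi reduction on $\Phi(b)$; for (2)$\Rightarrow$(1) the key move---grading $S$ by the exponent vectors of the $\ini(f_i)$ so that $\Ker\Psi$ is homogeneous and the top layer $F_0$ can be expanded in the binomials of $B$ with each summand pure of $\Psi$-degree $m(F)$---is exactly what is needed, and your chain
\[
\Psi(\nu_k\mu_i^{(k)})=\Psi(\nu_k)\,\ini(\Phi(\mu_i^{(k)}))\le \Psi(\nu_k)\,\ini(\Phi(b_{j_k}))<\Psi(\nu_k)\,m_{b_{j_k}}=m(F)
\]
correctly forces $m(F')<m(F)$. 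The ``moreover'' part reruns the same descent inside $\Ker\Phi$ modulo the ideal $K'$ generated by the liftings.

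Two small cosmetic points. First, the termination clause ``$m(H)=1$ forces $H$ to be a scalar'' tacitly assumes no $f_i$ is a nonzero constant; it is cleaner (and equally short) to invoke the well-ordering directly: a strictly decreasing sequence of monomials is finite, so the process must hit $H=0$. Second, in the (2)$\Rightarrow$(1) estimate you should note separately the degenerate case $\Phi(b_{j_k})=0$, where $q=0$ and the replacement simply deletes $\nu_k b_{j_k}$; the strict inequality $m(F')<m(F)$ still holds trivially. Neither point affects correctness.
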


The criterion was established by Robbiano and Sweedler \cite{RS} in a somewhat individual terminology; in the form above one finds it in Conca, Herzog and Valla \cite[Prop.~1.1]{CHV}. For the reader who would expect the relation $<$ in (2): if $b=\nu_1-\nu_2$ with monomials $\nu_1$, $\nu_2$, then $\ini(\Phi(b))<\ini(\Phi(\nu_1))=\ini(\Phi(\nu_2))$, unless $\Phi(b)=0$. The last statement of Theorem \ref{lift} is \cite[Cor.~2.1]{CHV}.

A priori, there is no ``natural'' monomial order on $S$. But suppose we have a monomial order $\prec$ on $S$. Then we can use $\prec$ as the tiebreaker in lifting the monomial order on $R$ to $S$:
$$
\mu\prec_\Psi \nu\quad\iff\quad \Psi(\mu)<\Psi(\nu) \text{ or } \Psi(\mu)=\Psi(\nu),\ \mu\prec \nu.
$$

\begin{theorem}\label {GBlift}
Suppose that the generating set $B$ of Theorem \ref{lift}(2) is a Gröbner basis of  $\Ker\Psi$ with respect to $\prec$. Then the elements in \eqref{liftedB} are a Gröbner basis of $\Ker\Phi$ with respect to $\prec_\Psi$.
\end{theorem}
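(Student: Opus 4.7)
First I would reduce to the case that each $f_i = \Phi(Z_i)$ is monic by the rescaling $Z_i \mapsto c_i^{-1} Z_i$, where $c_i$ is the leading coefficient of $f_i$; this is an automorphism of $S$ that only multiplies each monomial by a nonzero scalar, so it preserves $\ini_{\prec_\Psi}(\Ker\Phi)$, $\ini_\prec(\Ker\Psi)$, and the initial monomial of every individual element. After this normalization, $\Phi(Z^\alpha) = \Psi(Z^\alpha) + (\textup{strictly $<$-lower terms})$ for every $Z^\alpha \in S$, and each $b \in B$ is an honest binomial $\nu_1 - \nu_2$ with $\Psi(\nu_1) = \Psi(\nu_2)$.

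Next I would compute the $\prec_\Psi$-initial monomial of each lifted element $\tilde b = b - \sum_i \lambda_i \mu_i$. Writing $b = \nu_1 - \nu_2$ with $\Psi(\nu_1) = \Psi(\nu_2) =: N$, monicity forces the leading terms of $\Phi(\nu_1)$ and $\Phi(\nu_2)$ to cancel, so $\ini(\Phi(b)) < N$ in $R$. By the hypothesis of Theorem~\ref{lift}(2), each $\Psi(\mu_i) = \ini(\Phi(\mu_i)) \le \ini(\Phi(b)) < N$. Thus under $\prec_\Psi$ both $\nu_1$ and $\nu_2$ strictly dominate every $\mu_i$, and the tie between $\nu_1$ and $\nu_2$, which share the $\Psi$-value $N$, is broken by $\prec$. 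Hence $\ini_{\prec_\Psi}(\tilde b) = \ini_\prec(b)$.

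The core step is to show that $\{\ini_{\prec_\Psi}(\tilde b) : b \in B\}$ generates $\ini_{\prec_\Psi}(\Ker\Phi)$. Given a nonzero $f = \sum c_\alpha Z^\alpha \in \Ker\Phi$, let $M$ be the $<$-maximum of $\Psi(Z^\alpha)$ over the support of $f$, put $\mathcal{A} = \{\alpha : c_\alpha \ne 0,\ \Psi(Z^\alpha) = M\}$, and set $f_\top = \sum_{\alpha \in \mathcal{A}} c_\alpha Z^\alpha$. By construction of the tiebreaker order, $\ini_{\prec_\Psi}(f) = \ini_\prec(f_\top)$. On the other hand, the coefficient of $M$ in $\Phi(f)$ equals $\sum_{\alpha \in \mathcal{A}} c_\alpha$ by monicity, and since $\Phi(f) = 0$ this sum vanishes; therefore $\Psi(f_\top) = M \cdot \sum_{\alpha \in \mathcal{A}} c_\alpha = 0$, so $f_\top \in \Ker\Psi$. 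The Gr\"obner basis hypothesis on $B$ with respect to $\prec$ then produces $b \in B$ with $\ini_\prec(b) \mid \ini_\prec(f_\top) = \ini_{\prec_\Psi}(f)$, and the previous paragraph identifies $\ini_\prec(b)$ with $\ini_{\prec_\Psi}(\tilde b)$.

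The only real obstacle is the argument that the $\Psi$-top part $f_\top$ of an element of $\Ker\Phi$ automatically lands in $\Ker\Psi$. This is exactly what the tiebreaker structure of $\prec_\Psi$ is engineered to make work: it forces the layer of $f$ that maximizes the $\Psi$-image to coincide with the layer contributing to the leading monomial of $\Phi(f)$, so the vanishing $\Phi(f) = 0$ transfers to $\Psi(f_\top) = 0$. Apart from this observation, the rest is bookkeeping; in particular, the generation statement in the last sentence of Theorem~\ref{lift} is recovered for free, since a Gr\"obner basis always generates its ideal.
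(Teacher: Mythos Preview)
Your argument is correct. The paper does not actually prove this statement; it simply refers the reader to Sturmfels \cite[Cor.~11.6]{Stu} and Conca--Herzog--Valla \cite[Cor.~2.2]{CHV}. What you have written is essentially the standard proof one finds behind those references: identify the $\prec_\Psi$-leading monomial of a lift $\tilde b$ with $\ini_\prec(b)$, and for an arbitrary $f\in\Ker\Phi$ show that its ``$\Psi$-top layer'' $f_\top$ lies in $\Ker\Psi$ so that the Gr\"obner hypothesis on $B$ applies. Your normalization to monic $f_i$ is harmless and makes the bookkeeping clean; the key observation---that the coefficient of the maximal monomial $M$ in $\Phi(f)$ is $\sum_{\alpha\in\mathcal A} c_\alpha$, hence this sum vanishes---is exactly the point, and you have it right. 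One tiny remark: in the degenerate case $\Phi(b)=0$ (so no $\mu_i$ are needed) the inequality $\ini(\Phi(b))<N$ is vacuous, but then $\tilde b=b$ and the conclusion $\ini_{\prec_\Psi}(\tilde b)=\ini_\prec(b)$ is immediate anyway.
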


See Sturmfels \cite[Cor.~11.6]{Stu} or \cite[Cor.~2.2]{CHV}.

\section{Polymatroidal ideals} 
\label{sectpolym}

A monomial  ideal $I$ of $R=K[X_1,\dots, X_n]$ is \emph{polymatroidal} if  it is  is generated in a single degree, say $d$, and for all pairs $u=\prod_h X_h^{a_h} ,v=\prod_h X_h^{b_h}$ of monomials of degree $d$ in $I$ and for every $i$ such that $a_i>b_i$ there exists $j$ with $a_j<b_j$ and $X_j(u/X_i)\in I$. A square-free polymatroidal ideal it is said to be \emph{matroidal} because the underlying combinatorial object is a matroid.  Conca and Herzog (\cite{CH}) and Herzog and Hibi (\cite{HHbook}) proved:

\begin{theorem} 
Every polymatroidal ideal has a linear resolution, and the product of polymatroidal ideals is  polymatroidal.
\end{theorem}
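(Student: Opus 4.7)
The plan is to prove the two assertions separately. For the linear resolution, the strategy is to establish \emph{linear quotients} for a polymatroidal ideal $I$ and then invoke the standard fact that a monomial ideal generated in a single degree with linear quotients admits a linear resolution, obtained via iterated mapping cones on the short exact sequences
\[
0\to (R/L_k)(-d)\to R/(u_1,\ldots,u_{k-1})\to R/(u_1,\ldots,u_k)\to 0,
\]
where $L_k=(u_1,\ldots,u_{k-1}):u_k$ is to be shown generated by variables.

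For linear quotients, order the minimal monomial generators $u_1>u_2>\cdots>u_m$ of $I$ by reverse lexicographic order. Fix $j<k$. Let $s$ be the largest index at which $u_j$ and $u_k$ disagree; the revlex convention forces $\deg_s(u_k)>\deg_s(u_j)$. The exchange axiom applied to $(u_k,u_j)$ at $s$ yields an index $i$ with $\deg_i(u_k)<\deg_i(u_j)$ and $u_\ell:=X_i u_k/X_s\in I$. Maximality of $s$ forces $i<s$, and comparing $u_\ell$ with $u_k$ at their largest differing position (which is $s$) shows $u_\ell>u_k$ in revlex, i.e., $\ell<k$. Since $(u_\ell):u_k=(X_i)$ and $X_i$ divides $u_j/\gcd(u_j,u_k)$, the colon ideal $L_k$ is generated by the variables produced in this way, establishing linear quotients.

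For the second assertion, let $I$ and $J$ be polymatroidal in degrees $d$ and $d'$ respectively. Any degree $d+d'$ element of $IJ$ is a product $uv$ with $u\in G(I)$ and $v\in G(J)$, so $IJ$ is generated in degree $d+d'$ by $G(I)\cdot G(J)$. To verify the exchange axiom for $IJ$, fix $w,w'\in G(IJ)$ and an index $i$ with $\deg_i(w)>\deg_i(w')$. Among all factorizations $w=uv$, $w'=u'v'$ with $u,u'\in G(I)$ and $v,v'\in G(J)$, pick one minimizing the total discrepancy
\[
\Delta:=\sum_h|\deg_h(u)-\deg_h(u')|+\sum_h|\deg_h(v)-\deg_h(v')|.
\]
Since $\deg_i(u)+\deg_i(v)>\deg_i(u')+\deg_i(v')$, we may assume $\deg_i(u)>\deg_i(u')$. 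The exchange axiom in $I$ produces $j$ with $\deg_j(u)<\deg_j(u')$ and $\tilde u:=X_ju/X_i\in I$. If $\deg_j(w)<\deg_j(w')$, then $X_jw/X_i=\tilde uv\in IJ$ provides the required exchange. Otherwise $\deg_j(v)>\deg_j(v')$; apply the exchange axiom in $J$ in the reverse direction to transfer an $X_j$ from $v$ to $v'$, yielding a new factorization of $w$ and $w'$ whose discrepancy $\Delta$ has strictly dropped, contradicting the choice of factorization.

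The main obstacle is the product step: the factorization $w=uv$ is highly non-unique, and a naive single application of the exchange axiom in one factor need not land on an index $j$ admissible for the whole product. The minimality-of-discrepancy argument above is essentially the combinatorial heart of the classical fact that the Minkowski sum of two discrete polymatroids is a discrete polymatroid. Once the two-factor case is established, an easy induction on the number of factors extends the statement to arbitrary finite products of polymatroidal ideals.
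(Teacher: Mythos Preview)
The paper does not actually supply a proof of this theorem: it is quoted from \cite{CH} and \cite{HHbook} without argument, so there is no ``paper's own proof'' to compare against. Your first part, establishing linear quotients with respect to the reverse lexicographic order, is correct and is precisely the standard argument used in those references; nothing needs to be changed there.

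The product step, however, has a genuine gap in your Case~2. You have $\deg_j(u)<\deg_j(u')$ and $\deg_j(v)>\deg_j(v')$ and you propose to ``apply the exchange axiom in $J$ in the reverse direction to transfer an $X_j$ from $v$ to $v'$, yielding a new factorization of $w$ and $w'$''. But the exchange axiom applied to the pair $(v,v')$ at the index $j$ only produces an $h$ with $\deg_h(v)<\deg_h(v')$ and $v_1:=X_hv/X_j\in G(J)$; it does \emph{not} give you a new factorization of the same monomial $w$, because the companion factor would have to be $u_1:=X_ju/X_h$, and there is no reason for $u_1$ to lie in $G(I)$. Likewise, refactoring $w'$ via an exchange on $(u',u)$ at $j$ forces the new $J$-factor to be $X_jv'/X_h$, which again need not belong to $G(J)$. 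So the discrepancy $\Delta$ cannot be decreased by the move you describe, and the minimality argument breaks down as written.

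What is missing is either (i) the \emph{symmetric} exchange property (for $\deg_i(u)>\deg_i(u')$ one can choose $j$ with $\deg_j(u)<\deg_j(u')$ such that \emph{both} $X_ju/X_i\in G(I)$ and $X_iu'/X_j\in G(I)$), which is equivalent to the ordinary exchange property for discrete polymatroids but is a nontrivial theorem; or (ii) the characterization of discrete polymatroids via nondecreasing submodular rank functions, under which the Minkowski sum corresponds to adding the rank functions and submodularity is obviously preserved. The references \cite{HH} and \cite{HHbook} take route~(ii). If you want to keep a direct combinatorial argument, you should state and invoke the symmetric exchange property explicitly rather than the one-sided version.
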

  
Hence we may say that  the family
$$
\cF=\{ I :  I \mbox{ is polymatroidal}\}
$$
has linear products. 

Those polymatroidal ideals that are obtained as products of ideals of variables are called \emph{transversal}. 
For example,  $(X_1,X_2)(X_1,X_3)$ is a transversal polymatroidal ideal and $(X_1X_2, X_1X_3, X_2X_3)$  is polymatroidal, but not transversal. 

In \cite{HV} Herzog and Vladoiu proved the following theorem on primary decomposition.

\begin{theorem}
\label{primdecpolym}
The family of polymatroidal ideals has the multiplicative intersection property:
given a polymatroidal ideal $I$, one has  
$$
I=\Sect_P P^{v_P(I)}
$$ 
where the intersection is extended over all the monomial prime ideals $P$ (i.e., ideals generated by variables). 
\end{theorem}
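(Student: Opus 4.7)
The plan is to reduce the identity to a classical combinatorial characterization in discrete polymatroid theory, namely that a nonnegative integer vector dominates some base of a polymatroid if and only if it satisfies the associated co-rank inequalities. Since both $I$ and each $P^{v_P(I)}$ are monomial ideals, both sides of the asserted equality are monomial, and the inclusion ``$\subseteq$'' is obvious. Thus it suffices to verify the reverse inclusion at the level of individual monomials.

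Next, I would translate the problem into polymatroid language. For a monomial prime $P_F=(X_i:i\in F)$ and a monomial $X^c=X_1^{c_1}\cdots X_n^{c_n}$, the containment $X^c\in P_F^m$ holds iff $\sum_{i\in F}c_i\geq m$. Letting $B\subseteq\NN^n$ be the set of exponent vectors of the minimal generators of $I$, all of common coordinate sum $d$, the polymatroidal exchange condition is exactly the statement that $B$ is the set of bases of a discrete polymatroid on $[n]$; write $\rho(F)=\max_{a\in B}\sum_{i\in F}a_i$ for its rank function. Then $\rho$ is submodular with $\rho([n])=d$, and a standard min-max identity gives
$$
v_{P_F}(I)=\min_{a\in B}\sum_{i\in F}a_i=d-\rho([n]\setminus F).
$$
So the condition $X^c\in\bigcap_{F}P_F^{v_{P_F}(I)}$ is equivalent to $\sum_{i\in F}c_i\geq d-\rho([n]\setminus F)$ for all $F\subseteq [n]$, and what must be shown is that this forces the existence of some $a\in B$ with $a\leq c$ coordinatewise (which then exhibits $X^c$ as a multiple of the generator $X^a$, and hence as an element of $I$).

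To produce such an $a$, I would run an exchange argument. Pick $a\in B$ minimizing the excess $e(a)=\sum_{i}\max(a_i-c_i,0)$. If $e(a)>0$, set $F=\{i:a_i>c_i\}$ and pick $b\in B$ maximizing $\sum_{i\notin F}x_i$ over $x\in B$, so $\sum_{i\notin F}b_i=\rho([n]\setminus F)$. The hypothesis combined with $|a|=|b|=d$ forces
$\sum_{i\notin F}a_i = d-\sum_{i\in F}a_i < d-(d-\rho([n]\setminus F)) = \sum_{i\notin F}b_i,$
so some $j\notin F$ satisfies $b_j>a_j$. The symmetric exchange axiom applied to $a$ and $b$ at $j$ then yields an index $i$ with $a_i>b_i$ and $a-\mathbf{e}_i+\mathbf{e}_j\in B$. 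A careful choice (using the maximality of $b$ on $[n]\setminus F$) shows that $i$ can be taken in $F$, producing a new base of strictly smaller excess, contradicting minimality; therefore $e(a)=0$, and we are done.

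The main obstacle is the very last step: ensuring that the exchange index $i$ can in fact be placed inside $F$, so that the excess strictly decreases. This is the real content of the dominance lemma for polymatroids and is precisely where the submodularity of $\rho$ -- equivalently, the defining polymatroidal exchange property -- is genuinely used; the remainder of the proof is bookkeeping around the dictionary between monomial-ideal and polymatroid language.
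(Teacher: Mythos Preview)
The paper does not supply a proof of this theorem: it is quoted from the literature and attributed to Herzog and Vladoiu, so there is no argument in the paper to compare yours against. Your proposal must therefore stand on its own.

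Your reduction is correct and is the natural one. Both sides are monomial, the inclusion $I\subseteq\bigcap_P P^{v_P(I)}$ is immediate, and after your dictionary the reverse inclusion becomes precisely the statement that a vector $c\in\NN^n$ dominates some base of the discrete polymatroid $B$ if and only if $\sum_{i\in F}c_i\ge d-\rho([n]\setminus F)$ for every $F\subseteq[n]$. This is a special case of Edmonds' min--max theorem for polymatroids, and citing it would finish the proof cleanly.

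Where your write-up has a genuine gap is in the direct exchange argument. The maximality of $b$ on $[n]\setminus F$ does \emph{not} force the exchange index $i$ into $F$: if both $i,j\notin F$, then (under the symmetric exchange) $b+e_i-e_j$ has the same sum over $[n]\setminus F$ as $b$, so no contradiction arises. Moreover, even when $i\in F$ can be arranged, the excess does not strictly drop if $a_j=c_j$; it stays the same while $j$ migrates into the new ``bad'' set, and the iteration can stall. The clean repair is not a sharper choice of $b$ but a different mechanism: run the polymatroid greedy algorithm inside the box $0\le v\le c$, let $T$ be the unique maximal tight set for the output $v$ (submodularity of $\rho$ is used here), and observe that $|v|=\rho(T)+\sum_{i\notin T}c_i\ge d$ by your hypothesis with $F=[n]\setminus T$. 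Hence $v$ is a base with $v\le c$, which gives $X^c\in I$. This replaces the problematic exchange step entirely and is the standard way to establish the dominance lemma you are invoking.
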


They also proved that $v_P(I)$ can be characterized as the ``local" regularity of $I$ and $P$, that is the regularity of the ideal obtained from $I$ by substituting $1$  for the variables not in $P$. Of course one gets an irredundant primary decomposition by restricting the intersection to the ideals  $P\in \Ass(R/I)$. The problem of describing the associated primes of a polymatroidal ideal in combinatorial terms is discussed by Herzog, Rauf and Vladoiu in  \cite{HRV} where they proved:
 
\begin{theorem}  
Every polymatroidal ideal $I$ satisfies 
$$
\Ass(R/I^k)\subseteq  \Ass(R/I^{k+1})\qquad \mbox{for all } k>0. 
$$
Furthermore the equality
$$
\Ass(R/I)=\Ass(R/I^{k})\qquad \mbox{for all } k>0
$$
holds, provided  $I$ is  transversal. 
\end{theorem}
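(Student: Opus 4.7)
The plan treats the two assertions separately.

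\textbf{Monotonicity.} The first step is the standard reduction to the case where the associated prime is the maximal graded ideal $\mm$. For a monomial prime $P=(X_i:i\in S)$, one has $P\in \Ass(R/I^k)$ if and only if the maximal ideal of $R_S=K[X_i:i\in S]$ is associated to $(I|_{X_j=1,\,j\notin S})^k$, and the substituted ideal is again polymatroidal. It therefore suffices to prove $\mm\in \Ass(R/I^k)\Rightarrow\mm\in \Ass(R/I^{k+1})$. Fix a monomial $u$ with $(I^k:u)=\mm$. I look for a minimal generator $w$ of $I$ such that $v=uw$ satisfies $(I^{k+1}:v)=\mm$. The inclusion $\mm\subseteq(I^{k+1}:uw)$ is automatic, since $uX_i\in I^k$ forces $uwX_i\in I^k\cdot I=I^{k+1}$ for every $i$. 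The nontrivial step is to verify $uw\notin I^{k+1}$, and this is where the polymatroidal exchange property is used: one picks $w$ extremally (for instance, minimising a lexicographic measure of overlap with the support of $u$), and from a hypothetical factorisation $uw=s\cdot h_1\cdots h_{k+1}$ with $h_i\in G(I)$, iterated exchanges should transfer $w$ out of the right-hand product, yielding a factorisation of $u$ as $s'\cdot h_1'\cdots h_k'$ with $h_j'\in G(I)$. This would place $u$ in $I^k$, contradicting the choice of $u$; hence some $w$ works.

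\textbf{Transversal case.} The same localisation reduction applies: substituting $1$ for variables outside $S$ sends a transversal ideal to a transversal ideal, because each factor $P_i$ that contains a variable outside $S$ becomes the unit ideal while the others are unchanged. We are reduced to showing, for a transversal polymatroidal ideal $J=P_1\cdots P_t$ in a polynomial ring with maximal ideal $\mm$, the equivalence $\mm\in\Ass(R/J)\iff \mm\in\Ass(R/J^k)$. The implication $\Rightarrow$ is exactly part (1). For the converse, Theorem~\ref{primdecpolym} together with the identity $v_Q(J^k)=k\cdot v_Q(J)$ (which holds for transversal $J$ because $v_Q(J)=|\{i:P_i\subseteq Q\}|$ is additive under the product) yields
$$
J=\Sect_Q Q^{v_Q(J)},\qquad J^k=\Sect_Q Q^{k\,v_Q(J)}.
$$
A witness for essentiality of the $\mm$-component in $J^k$ is a monomial $m'$ with $\deg_Q m'\ge k\,v_Q(J)$ for $Q\subsetneq\mm$ but $\deg_\mm m'< k\,v_\mm(J)$. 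From $m'$ one must construct a monomial $m$ with the corresponding inequalities at scale $1$. The plan is a combinatorial descent: the transversal form of $v_Q(J)$ identifies, for each $Q\subsetneq\mm$, a distinguished variable $X_{j_Q}\in Q$ contributing to $\deg_Q m'$; removing a suitable multiple of variables from $m'$ preserves all the lower bounds $\deg_Q m\ge v_Q(J)$ while dropping $\deg_\mm m$ below $v_\mm(J)$.

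The principal obstacles are the exchange-based descent in part (1) (making precise the choice of $w$ and the iterated exchange that extracts $w$ from a factorisation of $uw$) and the combinatorial descent in part (2) (producing a witness at scale $1$ from a witness at scale $k$). Both rely on the polymatroidal exchange axiom and, in the second case, on the specific transversal structure; they cannot be obtained from the formal scaling of the valuations $v_Q$ alone.
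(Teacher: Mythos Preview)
The paper does not prove this theorem; it is quoted from \cite{HRV} without argument. So strictly speaking there is no paper proof to compare against. That said, Section~\ref{sectplf} of the paper proves, for products of ideals of linear forms, both a monotonicity lemma ($\Ass(R/I)\cup\Ass(R/J)\subseteq\Ass(R/IJ)$) and a combinatorial characterisation of associated primes via the graph $G_P$; specialised to transversal polymatroidal ideals these yield the second assertion of the theorem, though not the general polymatroidal monotonicity.

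Your outline names its own gaps honestly, and they are genuine. In part~(1), the localisation to $P=\mm$ is correct and the search for a generator $w$ with $(I^{k+1}:uw)=\mm$ is the standard shape of the argument, but the crux is precisely the step you flag: showing $uw\notin I^{k+1}$ for a well-chosen $w$. ``Iterated exchanges should transfer $w$ out'' is not yet an argument; nothing in your sketch explains which $w$ to choose or why the exchanges can be arranged to terminate in a factorisation of $u$ itself lying in $I^k$. A vague lexicographic extremality criterion does not suffice, and this step needs a real proof.

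In part~(2), the proposed descent from a scale-$k$ witness $m'$ to a scale-$1$ witness $m$ is the whole difficulty, and the obvious moves fail: for instance, replacing each exponent $a_i$ of $m'$ by $\lceil a_i/k\rceil$ does preserve the lower bounds $\deg_Q m\ge v_Q(J)$ for proper $Q$, but need not keep the total degree strictly below $v_\mm(J)$. A much shorter route, essentially the one taken in Section~\ref{sectplf}, bypasses the descent entirely: $I^k$ is again a transversal polymatroidal ideal with the \emph{same} underlying set of prime factors $P_1,\dots,P_w$ (each repeated $k$ times), so the graph $G_P$ is unchanged; since $P\in\Ass(R/I)$ is equivalent to connectedness of $G_P$, the set of associated primes is independent of $k$.
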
 
The latter equality is not true for general polymatroidal ideals. Examples of polymatroidal ideals $I$ such that $\Ass(R/I)\neq  \Ass(R/I^{2})$ are given by  some polymatroidal ideals of Veronese type. For example, for $I=(X_1X_2, X_1X_3, X_2X_3)$  one has $(X_1,X_2,X_3)\in \Ass(R/I^2)\setminus \Ass(R/I)$. 

From the primary decomposition formula of Theorem  \ref{primdecpolym} it follows that a polymatroidal ideal is integrally closed. Since products of polymatroidal ideals are polymatroidal, it follows then that the multi-Rees  algebra $R(I_1,\dots,I_w)$ of polymatroidal ideals $I_1,\dots, I_w$  is normal and hence Cohen-Macaulay by virtue of Hochster's theorem \cite[Th.~6.3.5]{BH}.  The same it true for the fiber ring $F(I_1,\dots, I_w)$ because it is an algebra retract of $R(I_1,\dots,I_w)$. Therefore: 

\begin{theorem}
\label{CMpolym}
Let  $I_1,\dots, I_w$ be polymatroidal ideals. Then $R(I_1,\dots,I_w)$ and $F(I_1,\dots, I_w)$ are  Cohen-Macaulay and normal. 
\end{theorem}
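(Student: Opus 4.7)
The plan is to reduce everything to Hochster's theorem on normal affine semigroup rings, using Theorem \ref{primdecpolym} and the fact (from Conca--Herzog and Herzog--Hibi) that products of polymatroidal ideals are again polymatroidal. First I would observe that every polymatroidal ideal $I$ is integrally closed. Indeed, Theorem \ref{primdecpolym} writes
$$
I=\Sect_{P} P^{v_P(I)},
$$
an intersection extended over monomial prime ideals $P$. Each such $P$ is generated by a subset of the variables, so each $P^{k}$ is a primary monomial ideal that is integrally closed (its Newton polyhedron is the $k$-fold translate of that of $P$). An intersection of integrally closed ideals is integrally closed, so $I$ is integrally closed.

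Next, since products of polymatroidals are polymatroidal, every product $I^{a}=I_1^{a_1}\cdots I_w^{a_w}$ is itself polymatroidal and therefore integrally closed by the previous step. Now
$$
\Rees=R(I_1,\dots,I_w)=\Dirsum_{a\in\NN^w} I^{a}T^{a}
$$
is the $K$-subalgebra of the polynomial ring $R[T_1,\dots,T_w]$ generated by the monomials $X_1,\dots,X_n$ and by the monomial generators $f_{ij}T_i$ of $I_iT_i$, hence an affine monomial (semigroup) algebra. As in the single-ideal case, its normalization equals $\Dirsum_a \overline{I^{a}}\,T^{a}$, computed degree by degree with respect to the $\ZZ^w$-grading in the $T_i$. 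Because each $I^{a}=\overline{I^{a}}$, the algebra $\Rees$ coincides with its normalization and is therefore normal. Hochster's theorem \cite[Th.~6.3.5]{BH} then yields that $\Rees$ is Cohen--Macaulay.

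Finally, for the fiber ring $F=F(I_1,\dots,I_w)$: as recalled in the remark following Theorem \ref{Tim}, $F$ is an algebra retract of $\Rees$ via the inclusion of the $K$-subalgebra generated by the elements of $\ZZ^{w+1}$-degrees $e_1,\dots,e_w$ together with the canonical projection $\Rees\to\Rees/\mm_R\Rees$. An algebra retract of a normal domain is a normal domain (if $\iota:F\hookrightarrow \Rees$, $\pi:\Rees\to F$, $\pi\iota=\id_F$, and $a\in\mathrm{Frac}(F)$ is integral over $F$, then $a\in\Rees\cap\mathrm{Frac}(F)$; writing $a=x/y$ with $x,y\in F$ and applying $\pi$ to $ya=x$ forces $a=\pi(a)\in F$). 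Thus $F$ is normal. Moreover $F$ is itself a monomial $K$-subalgebra of $R$, so it is an affine semigroup ring, and Hochster's theorem applies once more to give Cohen--Macaulayness. No step is technically hard; the only point worth a moment's thought is the identification of $\Rees$ as an affine semigroup ring whose normality is read off integral-closedness of every $I^{a}$, but this is the standard correspondence for monomial Rees algebras carried over verbatim to the multigraded setting.
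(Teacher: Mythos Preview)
Your argument is correct and follows essentially the same route as the paper: integral closedness of polymatroidal ideals via Theorem \ref{primdecpolym}, stability of the class under products, normality of the multi-Rees algebra from integral closedness of all $I^a$, Cohen--Macaulayness from Hochster's theorem, and passage to the fiber ring via the retract property. The paper is terser (it simply asserts that both properties transfer to $F$ as an algebra retract), whereas you spell out the retract argument for normality and invoke Hochster a second time for Cohen--Macaulayness of $F$; this extra care is harmless and arguably cleaner.
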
 

White's conjecture, in its original form, predicts that the fiber ring, called the \emph{base ring} of the matroid in this context, associated to a single matroidal ideal is defined by quadrics, more precisely by quadrics arising from exchange relations.  White's conjecture  has been extended to (the fiber rings of)  polymatroidal ideals by Herzog and Hibi in \cite{HH} who``did not escape from  the temptation" to ask also if  such a ring is Koszul and defined by a Gr\"obner basis of quadrics.  These conjectures are still open. The major progress toward a solution has been obtained by Laso\'n and Micha{\l}ek  \cite{LM}  who proved White's conjecture ``up to saturation" for matroids. Further  questions and potential  generalizations of White's conjecture refer to the  Rees algebra $R(I)$ of a matroidal (or polymatroidal) ideal $I$: is it defined by (a Gr\"obner basis) quadrics?  Is it Koszul?

Note however that the fiber ring of the multi-Rees  algebra associated to polymatroidal ideals need not be defined by quadrics. For example: 

\begin{example} 
\label{notquad}
For $I_1=(X_1,X_2)$, $I_2=(X_1,X_3)$,  $I_3=(X_2,X_3)$ the fiber ring of $R(I_1,I_2,I_3)$ is 
$$
K[T_1X_1,T_1X_2,T_2X_1,T_2X_3,T_3X_2,T_3X_3]
$$ 
and it is defined by a single cubic equation, namely  
$$
(T_1X_1)(T_2X_3)(T_3X_2)=(T_1X_2)(T_2X_1)(T_3X_3).
$$ 
\end{example} 

Nevertheless for a polymatroidal ideal $I$  Herzog,  Hibi and Vladoiu proved in \cite{HHV} that the Rees algebra $R(I)$ is of fiber type,  and it  might be true  that  multi-Rees  algebras $R(I_1,\dots, I_w)$ associated to polymatroidal ideals $I_1,\dots,I_w$ are of multi-fiber type.

\section{Products of ideals of linear forms} 
\label{sectplf}
Let $P_1,\dots, P_w$ be ideals of $R=K[X_1,\dots, X_n]$ generated by linear forms. Each $P_i$ is clearly a prime ideal with primary powers. 
One of the main results of  \cite{CH} is the following: 

\begin{theorem}  
The product $P_1\cdots P_w$ has a linear resolution. 
\end{theorem}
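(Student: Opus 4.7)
The plan is to apply Theorem \ref{Tim} by showing $\reg_0(\Rees)=0$ for the multi-Rees algebra $\Rees=R(P_1,\ldots,P_w)$. The approach is a Sagbi-type deformation reducing the question to the transversal polymatroidal case, combined with the Gr\"obner-basis criterion developed in Section \ref{initial}.

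After a generic linear change of coordinates, I may assume that for a fixed monomial order $<$ on $R$, each $P_i$ has a $K$-basis of linear forms $\ell_{i,1},\ldots,\ell_{i,k_i}$ whose leading variables are pairwise distinct, so that $Q_i:=\ini_<(P_i)$ is a monomial prime ideal generated by a subset of $\{X_1,\ldots,X_n\}$. Thus $R(Q_1,\ldots,Q_w)$ is a transversal polymatroidal multi-Rees algebra. Its defining ideal $\Ker\Psi$ in $\cS=R[Z_{ij}]$ is generated by two classes of elements: the symmetric-algebra syzygies $X_{s(i,j)}Z_{ik}-X_{s(i,k)}Z_{ij}$, which have $X$-degree $1$; and the toric relations among products of the variables that generate the multi-fiber ring, which involve no $X$'s at all. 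A suitable term order $\prec$ on $\cS$ (a diagonal-type order favoring the $X$-variables, in the spirit of Sturmfels' sorting order) yields a Gr\"obner basis of $\Ker\Psi$ whose elements are at most linear in $X_1,\ldots,X_n$.

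Next, I invoke Theorem \ref{GBlift} to lift this Gr\"obner basis to one of the defining ideal $\Ker\Phi$ of $\Rees$: each binomial $\mu_1-\mu_2\in\Ker\Psi$ lifts canonically by replacing each occurrence of $X_j$ by the associated linear form $\ell_{i,?}$ of which $X_j$ is the leading term, the resulting correction being absorbed by lower-order terms. Theorem \ref{lift} confirms this constitutes a valid Sagbi lifting, so $\ini_{\prec_\Psi}(\Ker\Phi)=\ini_\prec(\Ker\Psi)$ is generated by monomials at most linear in the $X_i$'s. The first statement of the criterion in Section \ref{initial} then gives $\reg_0(\Rees)=0$, and Theorem \ref{Tim} completes the proof.

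The main obstacle is producing the explicit Gr\"obner basis of $\Ker\Psi$ with the required $X$-degree bound, and verifying the Robbiano-Sweedler lifting of Theorem \ref{lift}: one must check that for the chosen diagonal term order the S-pair reductions of the quadratic generators of $\Ker\Psi$ do not introduce obstructions of $X$-degree $\geq 2$, and that every lifted binomial stays in $\Ker\Phi$ with its initial monomial unchanged. For transversal polymatroids this is tractable via the combinatorics of the underlying matroid (the fiber-ring relations are the standard exchange binomials), and the lifting reduces to direct substitution since the leading-variable structure of the generators $\ell_{i,j}$ is compatible with the monomial order by construction.
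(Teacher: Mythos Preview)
Your Sagbi-deformation approach is fatally broken at the very first step: the equality $\ini_<(P_1\cdots P_w)=\ini_<(P_1)\cdots\ini_<(P_w)$ that underlies the whole strategy fails in general, and no change of coordinates can rescue it. Take $P_1=(X_1,X_2)$ and $P_2=(X_1,X_3)$ in $K[X_1,X_2,X_3]$. After a \emph{generic} change of coordinates both $gP_1$ and $gP_2$ are generic $2$-planes, so $\ini_<(gP_1)=\ini_<(gP_2)=(X_1,X_2)$ and hence $\ini_<(gP_1)\cdot\ini_<(gP_2)=(X_1,X_2)^2$, which has $3$ minimal generators in degree~$2$. But $g(P_1P_2)$ has the same Hilbert function as $P_1P_2=(X_1^2,X_1X_2,X_1X_3,X_2X_3)$, which is $4$-dimensional in degree~$2$. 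Thus $\ini_<(g(P_1P_2))\supsetneq\ini_<(gP_1)\cdot\ini_<(gP_2)$, so $\Rees_{\ini}\subsetneq\ini_<(\Rees)$ and the generators of $\Rees$ are \emph{not} a Sagbi basis. The lifting criterion of Theorem~\ref{lift} therefore cannot possibly succeed: some binomial of $\Ker\Psi$ will fail to lift. The obstruction is intrinsic (a Hilbert-function mismatch), not a matter of choosing the term order more cleverly.

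Even if one ignores this, the two other ingredients you invoke are not established either. The claim that $\Ker\Psi$ has a Gr\"obner basis at most linear in the $X$-variables is merely asserted; the remark that ``the fiber-ring relations are the standard exchange binomials'' is precisely White's conjecture, which is open, and in any case the \emph{multi}-fiber ring need not be quadratic (Example~\ref{notquad}). And the ``canonical lifting by replacing $X_j$ with $\ell_{i,?}$'' is not how Theorem~\ref{lift} works: one must produce an element of $\Ker\Phi$, and the naive substitution does not land there.

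The argument in \cite{CH}, which the paper is quoting, proceeds entirely differently: by induction on the number of variables. One chooses a linear form $z$ avoiding the embedded primes, applies Lemma~\ref{reg} to reduce to $R/(z)$, and controls $\sup H^0_\mm(R/I)$ via the primary decomposition of Theorem~\ref{PrimDecPLF}, which bounds the saturation degree by $w-1$. No deformation to the monomial case is involved.
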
 

Hence may say that  the family
$$\cF=\{ P :    P \mbox{ is generated by linear forms and  } P\neq 0\}$$
has linear products. The theorem is proved by induction on the number of variables. The inductive step  is based on an estimate of the $0$-th local cohomology of the corresponding quotient ring or, equivalently, on the saturation degree of the corresponding ideal. The latter is controlled by means of the following primary decomposition computed in \cite{CH}:

\begin{theorem} 
\label{PrimDecPLF}
The family of ideals generated by linear forms has the multiplicative intersection property. In other words, for every $P_1,\dots, P_w\in \cF$ and $I=\prod_{i=1}^w P_i$ one has: 
$$
I= \Sect_{P\in \cF}   P^{v_P(I)}.
$$ 
\end{theorem}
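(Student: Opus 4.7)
The inclusion $I\subseteq\bigcap_P P^{v_P(I)}$ is immediate from the definition of $v_P$; the content lies in the reverse inclusion. I would first establish the explicit formula
\[
v_P(I)=\#\{i:P_i\subseteq P\}
\]
for every prime $P$ generated by linear forms. Since $R_P$ is a regular local ring, $P^{(k)}=P^k$, so $v_P(I)\ge \#\{i:P_i\subseteq P\}$ is trivial. For the opposite inequality, pick $\ell_i\in P_i\setminus P$ for each $P_i\not\subseteq P$ and any nonzero generator $\ell_i'\in P_i$ for each $P_i\subseteq P$. The associated graded ring $\gr_P R$ is a polynomial ring over $R/P$, hence a domain, so $\prod_{P_i\subseteq P}\ell_i'$ has nonzero initial form of degree exactly $k:=\#\{i:P_i\subseteq P\}$. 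Since the $\ell_i$ with $P_i\not\subseteq P$ are units in $R_P$, the product $\bigl(\prod_{P_i\not\subseteq P}\ell_i\bigr)\bigl(\prod_{P_i\subseteq P}\ell_i'\bigr)$ lies in $I\setminus P^{k+1}$, yielding $v_P(I)\le k$.

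With the formula in hand I would proceed by induction on $w$. The base case $w=1$ is trivial since $P_1$ is itself among the primes indexing the intersection. For the inductive step set $J=P_1\cdots P_{w-1}$; by induction $J=\bigcap_P P^{v_P(J)}$, and the additivity $v_P(I)=v_P(J)+v_P(P_w)$ coming from the formula splits $\bigcap_P P^{v_P(I)}$ as the intersection of $\bigcap_{P\not\supseteq P_w}P^{v_P(J)}$ (which contains $J$) with $\bigcap_{P\supseteq P_w}P^{v_P(J)+1}$. The plan is to show that these pieces together collapse to $JP_w$.

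The main obstacle is precisely this collapse, and I expect it to require a secondary induction on the number $n$ of variables, paralleling the inductive proof of the linear resolution of the product. Choose a generic linear form $z\in R_1$ lying outside every $P_i$ and every associated prime of every intermediate intersection appearing; set $\bar R=R/(z)$ and $\bar P_i=(P_i+(z))/(z)$. By the $n$-induction the primary decomposition statement holds for $\bar I=\bar P_1\cdots\bar P_w$ in $\bar R$. To lift back, one compares $I$ with $I+(z)$ and with the saturation $(I:z^\infty)$: Corollary \ref{Hlinres} applied to the linear resolution of $I$ gives $\sup H^0_{\mm}(R/I)\le w-1$, so $(I:z^\infty)/I$ is concentrated strictly below the generation degree of $I$. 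This degree bound is what permits lifting the primary decomposition from $\bar R$ back to $R$; the most delicate bookkeeping is in the generic choice of $z$, simultaneously controlling all relevant associated primes while preserving the structure of the family $\{\bar P_i\}$ in $\bar R$.
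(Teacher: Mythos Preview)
The paper does not supply its own proof of this statement; it attributes the result to \cite{CH} and, crucially, explains that in \cite{CH} the primary decomposition is the \emph{input} to the proof of the preceding theorem (linear resolution of $P_1\cdots P_w$): the decomposition is what bounds $H^0_\mm(R/I)$, which in turn drives the induction on the number of variables for the linear resolution. Your proposal inverts this dependency. When you invoke Corollary~\ref{Hlinres} to bound $\sup H^0_\mm(R/I)$ you are assuming that $I$ already has a linear resolution, which in the logical order of \cite{CH} (and of this paper) rests on the very statement you are trying to prove. So as written the argument is circular.

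Your computation of $v_P(I)=\#\{i:P_i\subseteq P\}$ via $\gr_P R$ is correct and is indeed the right first step; it immediately reduces the intersection to the finitely many primes $P_A=\sum_{i\in A}P_i$. But beyond that the proposal is only an outline. The two inductions do not mesh: the $w$-induction produces the reformulation ``$\bigcap_{P\supseteq P_w}P^{v_P(J)+1}\cap\bigcap_{P\not\supseteq P_w}P^{v_P(J)}=JP_w$'', yet the subsequent $n$-induction is aimed at the original statement for $I$, not at this reformulation, so the inductive hypothesis on $J$ is never actually used. More importantly, the lifting step is asserted rather than carried out: you say the degree bound on $(I:z^\infty)/I$ ``permits lifting the primary decomposition from $\bar R$ back to $R$'', but the mechanism is missing. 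What one actually needs is (i) that reduction modulo a generic $z$ takes $\tilde I:=\bigcap_A P_A^{v_{P_A}(I)}$ to the corresponding intersection for the $\bar P_i$, and (ii) that $z$ is a nonzerodivisor on $R/\tilde I$. Given these, $\tilde I+(z)=I+(z)$ by induction on $n$, hence $\tilde I=I+z\tilde I$, and graded Nakayama finishes --- with no appeal to linear resolutions. Point (i), the commutation of the intersection with reduction modulo $z$, is the genuine technical point, and your sketch does not touch it.
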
 

Clearly one can restrict the intersection to the primes of the form 
$$
P_A=\sum_{i\in A} P_i
$$ for a non-empty subset $A$ of $\{1,\dots,w\}$.  Setting  
$$
\cP= \{ P :  P=P_A \mbox{ for some  non-empty subset  $A$ of } \{1,\dots,w\} \},
$$  
one gets 
$$
I= \Sect_{P\in \cP}   P^{v_P(I)}.
$$ 
This primary decomposition need not be irredundant.  So an important  question is whether a given $P\in \cP$ is  associated to $R/I$. 
Inspired by results in \cite{HRV}, we have a partial answer: 

\begin{lemma} 
With the notation above, let $P\in \cP$ and let $V=\{ i : P_i\subseteq  P\}$. 
\begin{enumerate}
\item Let $G_P$ be the graph with vertices $V$ and edges $\{i,j\}$ such that  $P_i \sect P_j$ contains a non-zero linear form. If $G_P$ is connected, then $P\in \Ass(R/I)$.

\item Assume that $P$ can be written as $P'+P''$ with $P', P''\in \cF $ such that $P'\sect P''$ contains no linear form and for every $i\in V$ one has either $P_i\subseteq P'$ or  $P_i\subseteq P''$. Then $P\notin \Ass(R/I)$.
\end{enumerate} 
\end{lemma}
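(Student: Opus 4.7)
I propose to handle both parts by first localizing at $P$: for each $j\notin V$, the ideal $P_j\not\subseteq P$ contains a linear form that is a unit in $R_P$, so $IR_P=I_VR_P$ with $I_V=\prod_{i\in V}P_i$, and the standard correspondence $\Ass(N_P)\leftrightarrow\{Q\in\Ass N:Q\subseteq P\}$ then gives $P\in\Ass(R/I)$ if and only if $P\in\Ass(R/I_V)$. Moreover, $R_P$ is a regular local ring with maximal ideal $PR_P$, and each nonzero linear form in $P$ has nonzero image in $P/P^2$, hence $PR_P$-order $1$; by additivity of the order function $v=v_{PR_P}$ on $R_P$, any product of $m$ such linear forms has order exactly $m$. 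In particular $I_V\subseteq P^{|V|}$.

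For part (1), it suffices to show $P\in\Ass(R/I_V)$. Pick a spanning tree $T$ of $G_P$, and for each edge $a=\{i_a,j_a\}$ of $T$ a nonzero linear form $\ell_a\in P_{i_a}\sect P_{j_a}$; set $f=\prod_{a\in T}\ell_a$, of degree $|V|-1$. Since $f\in P^{|V|-1}\setminus P^{|V|}$, we have $f\notin I_V$. To prove $(I_V:f)=P$: given a linear form $y\in P$, write $y=\sum_{i\in V}y_i$ with $y_i\in P_i$; for each summand, orient $T$ with root $i$, so that every vertex $v\neq i$ has a unique parent edge $a(v)$ with $\ell_{a(v)}\in P_v$. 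This distributes the $|V|$ factors of $y_if$ bijectively among $\{P_j:j\in V\}$ --- $y_i$ to $P_i$ and each $\ell_{a(v)}$ to $P_v$ --- so $y_if\in\prod_{j\in V}P_j=I_V$; hence $P\subseteq(I_V:f)$. Conversely, if $sf\in I_V\subseteq P^{|V|}$ with $s\notin P$, then in $R_P$ multiplicativity of $v$ gives $v(sf)=v(s)+v(f)=0+(|V|-1)<|V|$, contradicting $sf\in(PR_P)^{|V|}$. Hence $(I_V:f)=P$ and $P\in\Ass(R/I_V)$, so $P\in\Ass(R/I)$.

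For part (2), set $V'=\{i\in V:P_i\subseteq P'\}$ and $V''=\{i\in V:P_i\subseteq P''\}$. If $i\in V'\sect V''$, then $P_i\subseteq P'\sect P''$ would force its linear-form generators to vanish, contradicting $P_i\in\cF$; combined with the hypothesis, $V=V'\sqcup V''$. The conditions $(P')_1\sect(P'')_1=0$ and $(P')_1+(P'')_1=P_1$ permit a linear change of coordinates under which $P'=(X_1,\dots,X_{k'})$ and $P''=(X_{k'+1},\dots,X_{k'+k''})$; then $I_{V'}=\prod_{i\in V'}P_i$ is extended from $R'=K[X_1,\dots,X_{k'}]$ and $I_{V''}$ from $R''=K[X_{k'+1},\dots,X_{k'+k''}]$. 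Because these live in disjoint variable sets, $I_{V'}I_{V''}=I_{V'}\sect I_{V''}$, and the embedding $R/(I_{V'}\sect I_{V''})\hookrightarrow R/I_{V'}\dirsum R/I_{V''}$ yields $\Ass(R/I_V)\subseteq\Ass(R/I_{V'})\union\Ass(R/I_{V''})$. Being extended from a homogeneous ideal of $R'$, every associated prime of $R/I_{V'}$ has the form $\mathfrak{p}R$ with $\mathfrak{p}$ a homogeneous prime of $R'$, and is therefore contained in $P'$; analogously associated primes of $R/I_{V''}$ lie in $P''$. Since $P\not\subseteq P'$ and $P\not\subseteq P''$, we conclude $P\notin\Ass(R/I_V)$, so $P\notin\Ass(R/I)$ by the initial reduction.

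The main obstacle is the combinatorial core of (1): constructing the witness $f$ and verifying $Pf\subseteq I_V$ via the rooted-tree assignment uses the connectivity of $G_P$ in an essential way. Once that is in hand, both the $PR_P$-adic order computation for $(I_V:f)\subseteq P$ and the disjoint-variable manipulation in (2) are routine.
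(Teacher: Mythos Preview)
Your proof is correct and follows essentially the same strategy as the paper's: localize to discard the factors $P_j$ with $j\notin V$, build the witness $f$ from a spanning tree of $G_P$ for part~(1), and in part~(2) split $I_V=I_{V'}\cap I_{V''}$ as an intersection of ideals in disjoint variable sets so that every associated prime lies in $P'$ or in $P''$. The only cosmetic differences are that the paper observes $f\notin I_V$ ``by degree reasons'' (since $I_V$ is generated in degree $|V|$ while $\deg f=|V|-1$) in place of your $PR_P$-adic valuation computation, and it leaves the verification of $P_if\subseteq I_V$ implicit (``by construction'') where you spell out the rooted-tree assignment.
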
 

\begin{proof} 
(1) By localizing at $P$  we may right away assume that $P=\sum_{i=1}^w P_i=(X_1,\dots, X_n)$ and $V=\{1,\dots,w\}$. Since the graph $G_P$ is connected, we can take a spanning tree $T$ and for each edge $\{i,j\}$ in $T$ we may take a linear form $L_{ij}\in P_i\sect P_j$. The product $F=\prod_{(i,j)\in T} L_{ij}$ has degree $w-1$ and, by construction, $P_iF\subseteq I$ for all $i$. Hence $P\subseteq  I:F$. Since $F\notin I$ by degree reasons, it follows that $P=I:F$. 

(2) Again we may assume $P=\sum_{i=1}^w P_i =(X_1,\dots, X_n)$ and we may further assume that $P'=(X_1,\dots, X_m)$ and $P''=(X_{m+1}, \dots, X_n)$. We may also assume that $P_i\subseteq P'$ for $i=1,\dots,c$ and $P_i\subseteq P''$ for $i=c+1,\dots,w$ for some $m$ and $c$ such that $1\leq m<n$ and $1\leq c<w$.
Set $J=P_1\cdots P_c$ and $H=P_{c+1}\cdots P_w$. Then $I=JH=J\sect H$ because $J$ and $H$ are ideals in distinct variables. We may conclude that any associated prime of $I$ is either contained in $P'$ or in $P''$, and hence $P$ cannot be associated to $I$. 
\end{proof} 

When each of the $P_i$ is generated by indeterminates, then $I$ is a transversal polymatroid, and for a given $P\in \cP$ either (1) or (2) is satisfied. Hence we have, as a corollary, the following results of  \cite{HRV}. We state it in a slightly different form. 

\begin{corollary} Let  $I=P_1\cdots P_w$ with $P_i\in \cF$ generated by variables (i.e., $I$ is a transversal  polymatroidal ideal).  Then $P\in \cP$ is associated to $R/I$ if and only if the graph $G_P$ is connected. 
\end{corollary}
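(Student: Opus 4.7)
The plan is to deduce the corollary directly from the two parts of the preceding lemma: part~(1) gives the ``if'' direction immediately, so the work is entirely in the ``only if'' direction, which I will establish by contrapositive. That is, assuming $G_P$ is disconnected, I will exhibit a decomposition $P=P'+P''$ satisfying hypothesis~(2) of the lemma, which then forces $P\notin\Ass(R/I)$.

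For the contrapositive, localize at $P$ so that $P=(X_1,\dots,X_n)$ and $V=\{1,\dots,w\}$. The crucial feature of the transversal case is that each $P_i$ is generated by a set of variables $S_i\subseteq\{X_1,\dots,X_n\}$; consequently $P_i\cap P_j$ contains a nonzero linear form if and only if $S_i\cap S_j\neq\emptyset$. Thus the edges of $G_P$ are precisely the pairs $\{i,j\}$ for which $S_i$ and $S_j$ share a variable. Suppose $G_P$ is disconnected and pick a partition $V=A_1\sqcup A_2$ into nonempty subsets with no edges between them; then $S_i\cap S_j=\emptyset$ whenever $i\in A_1$ and $j\in A_2$.

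Define
\[
P'=\sum_{i\in A_1}P_i \qquad\text{and}\qquad P''=\sum_{j\in A_2}P_j.
\]
Clearly $P'+P''=P$ and $P',P''\in\cF$. By the disjointness of the variable sets across the partition, $P'$ and $P''$ are generated by disjoint sets of variables, so $P'\cap P''$ is generated in degree $\ge 2$ and in particular contains no linear form. Moreover, by construction every $P_i$ is contained either in $P'$ or in $P''$. Hence part~(2) of the lemma applies and yields $P\notin\Ass(R/I)$, as desired.

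The main conceptual point -- and the step that makes the transversal setting work cleanly, unlike the general case of ideals of linear forms -- is the equivalence ``$P_i\cap P_j$ contains a linear form $\Longleftrightarrow S_i\cap S_j\neq\emptyset$''; this is what lets disconnectedness of $G_P$ translate into a genuine variable-disjoint splitting of $P$, which is precisely the hypothesis required by part~(2). I do not foresee any real obstacle beyond setting up this combinatorial translation correctly.
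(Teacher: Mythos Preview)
Your argument is correct and is exactly the approach the paper has in mind: the text preceding the corollary simply asserts that in the transversal case, for each $P\in\cP$ either hypothesis~(1) or hypothesis~(2) of the lemma is satisfied, and you have supplied the straightforward verification of this dichotomy. The localization step is unnecessary (you only need to verify hypothesis~(2) in the original ring, and your construction of $P'$ and $P''$ does this directly for the indices in $V$), but it does no harm.
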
 

But in general, for $I=P_1\cdots P_w$, a prime  $P\in \cP$ can be associated to $I$ even when $G_P$ is not connected: 

\begin{example} 
Let $R=K[X_1,\dots,X_4]$ and let  $P_1,P_2,P_3$  be ideals generated by $2$ general linear forms each and $I=P_1P_2P_3$.  Then $P=P_1+P_2+P_3=(X_1,\dots, X_4)$ is associated to $R/I$ and $G_P$ is not connected (it has no edges). That $P$ is associated to $R/I$ can be  proved by taking a non-zero quadric $q$ in the intersection $P_1\sect P_2 \sect P_3$ and checking that, by construction, $qP\subseteq I$. 
\end{example} 

The general question of whether a prime ideal $P\in \cP$ is associated to $R/I$ can be reduced by localization to the following: 

\begin{question} 
Let $P_1,\dots, P_w\in \cF$ and $I=P_1\cdots P_w$. Under which (possibly combinatorial) conditions on $P_1,\dots, P_w$ is $\sum_{i=1}^w  P_i$ associated to $R/I$? 
\end{question} 

Another interesting (and very much related)  question is  the description of the relationship between the associated primes of $I$ and those of its powers. 
We have: 

\begin{lemma} 
Let $I$ and $J$ be ideals that are products of elements in $\cF$.  Then 
$$
\Ass(R/I)\cup \Ass(R/J) \subseteq \Ass(R/IJ).
$$
In particular, 
$\Ass(R/I^h)\subseteq \Ass(R/I^{h+1})$ for all $h>0$. 
\end{lemma}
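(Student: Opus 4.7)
The plan is to prove $\Ass(R/I)\subseteq \Ass(R/IJ)$; the other inclusion $\Ass(R/J)\subseteq \Ass(R/IJ)$ follows by the symmetry $IJ=JI$. Let $P\in\Ass(R/I)$ and choose $f\in R$ with $I:f=P$. The strategy is to produce an element of the form $g=fh$ for a cleverly chosen $h\in J$ such that $IJ:g=P$. The main technical input is that the $P$-adic valuation $v_P$ is genuinely additive on nonzero elements of $R$: since $P$ is generated by linear forms, the localization $R_P$ is a regular local ring whose associated graded ring is a polynomial ring, hence a domain; moreover $P^{(k)}=P^k$ for all $k$. Thus $v_P(ab)=v_P(a)+v_P(b)$ for nonzero $a,b\in R$, and consequently $v_P(IJ)=v_P(I)+v_P(J)$.

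The crucial preliminary is the identity $v_P(f)=v_P(I)-1$. The inequality $v_P(f)\ge v_P(I)-1$ is immediate from $fP\subseteq I\subseteq P^{v_P(I)}$ together with a linear form $\ell\in P$ satisfying $v_P(\ell)=1$. For the reverse, suppose for contradiction $v_P(f)\ge v_P(I)$. Using the primary decomposition $I=\bigcap_{Q}Q^{v_Q(I)}$ from Theorem \ref{PrimDecPLF}, one computes $I:f=\bigcap_{Q\in\cB}Q^{v_Q(I)-v_Q(f)}$, where $\cB$ denotes the set of primes $Q$ appearing in the decomposition with $v_Q(f)<v_Q(I)$. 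By assumption $P\notin\cB$; but the containment $P\subseteq I:f$ forces every linear form of $P$ to lie in each $Q\in\cB$, whence $P\subsetneq Q$. Since $Q^k$ contains no linear forms for $k\ge 2$, each exponent $v_Q(I)-v_Q(f)$ is forced to equal $1$, yielding $I:f=\bigcap_{Q\in\cB}Q$. Picking a linear form $\ell_Q\in Q\setminus P$ for each $Q\in\cB$, the product $\prod_{Q\in\cB}\ell_Q$ lies in every such $Q$ yet not in $P$ (as $P$ is prime), contradicting $I:f=P$.

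With this in hand, choose $h\in J$ with $v_P(h)=v_P(J)$ (possible since $J\not\subseteq P^{v_P(J)+1}$ by definition of $v_P(J)$) and set $g:=fh$. Then $v_P(g)=v_P(f)+v_P(h)=v_P(IJ)-1$, so $g\notin P^{v_P(IJ)}\supseteq IJ$ and in particular $g\notin IJ$. The inclusion $gP=(fP)h\subseteq Ih\subseteq IJ$ shows $P\subseteq IJ:g$, and conversely any $x\in R$ with $xg\in IJ\subseteq P^{v_P(IJ)}$ satisfies $v_P(x)+v_P(g)\ge v_P(IJ)$, forcing $v_P(x)\ge 1$, hence $x\in P$. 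Thus $IJ:g=P$ and $P\in\Ass(R/IJ)$. The \emph{in particular} statement follows by applying the first part with $I^h$ in place of $I$ and $I$ in place of $J$, since $I^h$ is itself a product of ideals in $\cF$. The hardest step is the preliminary identity $v_P(f)=v_P(I)-1$, which depends essentially on Theorem \ref{PrimDecPLF} and on the fact that powers of primes generated by linear forms are primary.
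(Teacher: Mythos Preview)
Your proof is correct, but it follows a genuinely different route from the paper's.

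The paper's argument is shorter and more direct: after writing $I=P_1\cdots P_w$ and $J=Q_1\cdots Q_v$, it localizes at $P$ (and changes coordinates) to reduce to the case where $P$ is the graded maximal ideal and every factor $P_i,Q_j$ is contained in $P$. Then, instead of computing $v_P(f)$, it invokes the \emph{linear resolution} of $I$: since $\reg(R/I)=w-1$, any homogeneous $f$ with $I:f=P$ satisfies $\deg f\le w-1$. Consequently $fJ$ has generators in degree $<w+v$, whereas $IJ$ starts in degree $w+v$, so $fJ\not\subseteq IJ$; together with $P\cdot fJ\subseteq IJ$ and the maximality of $P$ this gives $(IJ):(fJ)=P$.

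Your approach avoids localization entirely and never appeals to the linear resolution. Instead you exploit the additivity of the $P$-adic valuation (valid because $\gr_P(R)$ is a domain) and use Theorem~\ref{PrimDecPLF} to pin down $v_P(f)=v_P(I)-1$; then a single element $g=fh$ (rather than the ideal $fJ$) serves as witness. In effect, after the paper's reduction your identity $v_P(f)=v_P(I)-1$ becomes the degree statement $\deg f=w-1$, so the two arguments are morally parallel; but the technical inputs differ. The paper's version is more in the spirit of the surrounding material (linear resolutions), while yours is self-contained modulo the primary decomposition and would work in any setting where the multiplicative intersection property holds for a family of primes whose powers are primary and whose associated graded rings are domains.
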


\begin{proof} Let $P_1,\dots, P_w, Q_1,\dots, Q_v \in \cF$  such that  $I=P_1\cdots P_w$ and $J=Q_1\cdots Q_v$.   
Let $P\in\Ass(R/I)$. We know that $P=\sum_{i\in A} P_i$ for a subset $A$ of $\{1,\dots, w\}$. Localizing at $P$ we may restrict our attention to the factors $P_i\subseteq  P$ and $Q_j\subseteq P$. Hence we assume that 
$P=\sum_{i=1}^w P_i=(X_1,\dots,X_n)$.  Let $f$ be a homogeneous element such that $P=I:(f)$. Since $I$ has a linear resolution, it coincides with its saturation from degree $w$ on.  Hence $f$ has degree $w-1$. Then $P=(IJ):(fJ)$ because $fJ\not\subseteq IJ$ by degree reasons. 
\end{proof} 

The main question here is the following: 

\begin{question} 
Let $P_1,\dots, P_w\in \cF$ and $I=P_1\cdots P_w$. Is it true that $\Ass(R/I)=\Ass(R/I^k)$ for every $k>0$? 
\end{question} 

We conclude the section with the following: 

\begin{theorem} 
\label{CMPLF}
Let $P_1,\dots, P_w\in \cF$. The multi-Rees  algebra $R(P_1,\dots, P_w)$ and its multi-fiber ring $F(P_1,\dots, P_w)$ are  normal and Cohen-Macaulay. Furthermore they are defined by Gr\"obner bases  of elements of degrees  bounded above by $\sum_{i=0}^w e_{i}  \in \ZZ^{w+1}$. 
\end{theorem}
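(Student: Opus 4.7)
The plan is to realize $\Rees=R(P_1,\dots,P_w)$ as a Sagbi deformation of a toric algebra associated with a ``transversal monomial'' situation, and transfer all assertions through the deformation. I would begin by assuming $K$ infinite and choosing, after a generic $K$-linear change of coordinates of $R$, a monomial order $<$ on $R$ together with bases of linear forms $f_{i,1},\dots,f_{i,m_i}$ of each $P_i$ such that the initial monomials $\ini_<(f_{i,j})=X_{\sigma(i,j)}$ are distinct; then each $\ini_<(P_i)=(X_{\sigma(i,j)}:j)$ is a monomial prime ideal generated by variables. Extending $<$ to $R[T_1,\dots,T_w]$ compatibly with the multi-grading, one presents $\Rees$ as a quotient of $\cS=R[Z_{i,j}]$ via $\Phi(Z_{i,j})=f_{i,j}T_i$, and its expected toric deformation $\Rees_\ini=R(\ini(P_1),\dots,\ini(P_w))$ via $\Psi(Z_{i,j})=X_{\sigma(i,j)}T_i$.

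The core of the argument is to produce a Gr\"obner basis $B$ of $\Ker\Psi$ whose binomials have multi-degree componentwise at most $\sum_{i=0}^w e_i$, and to show that each $b\in B$ lifts to $\Ker\Phi$ in the sense of Theorem \ref{lift}(2). The generators of $\Rees_\ini$ are indexed by edges of the bipartite incidence graph on $\{1,\dots,w\}\sqcup\{1,\dots,n\}$ with edges $\{i,\sigma(i,j)\}$; binomials in $\Ker\Psi$ correspond to closed walks, and the multi-degree bound $(1,\dots,1)$ amounts exactly to the walk being a simple cycle. I would use standard toric-ideal techniques on bipartite graphs to produce such a Gr\"obner basis consisting of primitive cycle binomials together with the ``column exchange'' relations $X_{\sigma(i,j)}Z_{i,k}-X_{\sigma(i,k)}Z_{i,j}$ that encode the single-$P_i$ symmetric relations. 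For the lifting, I would straighten: replace each $Z_{i,j}$ in $b$ by $f_{i,j}T_i$ and reduce the resulting tails using the $<$-basis of $P_i$; the tiebreaker $\prec_\Psi$ ensures the correction terms have strictly smaller initial monomial. Theorem \ref{GBlift} then produces a Gr\"obner basis of $\Ker\Phi$ of the same multi-degrees, which gives the final assertion of the theorem and simultaneously the Sagbi identity $\ini(\Rees)=\Rees_\ini$.

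Once this identity is in hand, $\Rees_\ini$ is a multi-Rees algebra of monomial prime ideals, hence of polymatroidal ideals, and Theorem \ref{CMpolym} shows that $\Rees_\ini$ and its multi-fiber ring are Cohen-Macaulay and normal. I would transfer these properties to $\Rees$ along the flat Sagbi family: Cohen-Macaulayness via preservation of Krull dimension and Hilbert function combined with semicontinuity of depth, and normality because the special fibre is normal and $\Rees$ is a domain of the same dimension. The multi-fiber ring $F(P_1,\dots,P_w)$ inherits both properties as an algebra retract of $\Rees$, and the portion of the lifted $B$ with zeroth multi-degree $0$ serves as its Gr\"obner basis. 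The hard part will be the combinatorial construction of $B$ with the sharp degree bound and the clean verification of the lifting condition for the cycle binomials; the cubic relation of Example \ref{notquad} already shows that the bound $(1,\dots,1)$ is optimal in the toric model and hence after lifting.
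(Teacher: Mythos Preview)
Your Sagbi-deformation plan has a genuine gap: the identity $\ini(\Rees)=\Rees_{\ini}$ that you announce is false for ideals of linear forms, and no generic change of coordinates or choice of monomial order can rescue it. The point is that $\ini_<(P_i)$ is always generated by a subset of the variables, so any two $P_i,P_j$ of the same height acquire the \emph{same} initial ideal. Take $n=3$ and two generic height-$2$ ideals $P_1=(f_1,f_2)$, $P_2=(g_1,g_2)$; then $\ini(P_1)=\ini(P_2)=(X_1,X_2)$ and $\dim_K((X_1,X_2)^2)_2=3$, while $\dim_K(P_1P_2)_2=4$. Hence $\ini(P_1P_2)\supsetneq\ini(P_1)\ini(P_2)$, so by Theorem~\ref{lift} at least one binomial of $\Ker\Psi$ cannot lift. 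Concretely, the cycle binomial $b=Z_{1,1}Z_{2,2}-Z_{1,2}Z_{2,1}$ has $\Phi(b)=(f_1g_2-f_2g_1)T_1T_2$ with leading monomial $X_1X_3T_1T_2$, and no $K$-combination of the four products $f_pg_qT_1T_2$ (all with leading monomial in $(X_1,X_2)^2$) can absorb this term subject to the bound in Theorem~\ref{lift}(2). Consequently the transfer of Cohen--Macaulayness, normality, and the degree bound from $\Rees_{\ini}$ to $\Rees$ along a Sagbi family is unavailable.

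The paper's argument is entirely different and avoids any toric degeneration. Normality is obtained directly from the multiplicative intersection property (Theorem~\ref{PrimDecPLF}): every product of ideals in $\cF$ is integrally closed, hence $R(P_1,\dots,P_w)$ is normal, and its retract $F(P_1,\dots,P_w)$ inherits normality. For Cohen--Macaulayness and the Gr\"obner degree bound, one first observes $R(P_1,\dots,P_w)\cong F(P_0,P_1,\dots,P_w)$ with $P_0=(X_1,\dots,X_n)$, reducing everything to multi-fiber rings. Such an $F$ is a coordinate subring of the Segre product $R*K[T_1,\dots,T_w]$, whose defining ideal (the $2$-minors of a generic matrix) is a Cartwright--Sturmfels ideal in the sense of \cite{CDG2}; since this class is closed under elimination of variables, the defining ideal of $F$ is again Cartwright--Sturmfels. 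Brion's theorem on prime ideals with multiplicity-free multidegree then yields Cohen--Macaulayness, and the bound $\sum_{i=0}^w e_i$ on Gr\"obner degrees is a general feature of Cartwright--Sturmfels ideals.
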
 

\begin{proof}  The  multiplicative intersection property implies that a product of elements in $\cF$ is integrally closed, and this entails the normality of $R(P_1,\dots, P_w)$. The multi-fiber ring is normal as well because it is an algebra retract of the Rees algebra. By construction,  $R(P_1,\dots, P_w)$ can be identified  with  $F(P_0,P_1,\dots, P_w)$ where $P_0=(X_1,\dots, X_n)$. Hence it is enough to prove the Cohen-Macaulay property of the multi-fiber ring $F=F(P_1,\dots, P_w)$. Note that  $F$ is a subring of the Segre product $R*S$ of $R$ with $S=K[T_1,\dots, T_w]$.  The defining ideal of $R*S$, i.e.,  the ideal of $2$-minors of a generic $m\times w$ matrix, has a square-free generic initial ideal with respect to the $\ZZ^w$-graded structure, as proved in \cite{C}. So it is a Cartwright-Sturmfels ideal, a notion  defined in Conca, De Negri and Gorla \cite{CDG2} that was inspired by result of Cartwright and Sturmfels  \cite{CS} and Conca, De Negri and Gorla \cite{CDG}. In \cite{CDG2} it is proved that eliminating variables from a Cartwright-Sturmfels ideal one gets a Cartwright-Sturmfels ideal. So $F$ itself is defined by a Cartwright-Sturmfels ideal. 
Such an ideal has a multiplicity-free multidegree. Hence we may use Brion's theorem \cite{B}  asserting that a multigraded prime ideal with  multiplicity-free multidegree defines a Cohen-Macaulay ring. Finally the statement about the degrees of Gr\"obner basis elements is a general fact about  Cartwright-Sturmfels ideals. 
\end{proof} 

As we have seen already in \ref{notquad}, we cannot expect that $R(P_1,\dots, P_w)$ and $F(P_1,\dots, P_w)$ are  defined by quadrics. But  the strategy developed in  \cite{C}  together with  \ref{CMPLF} implies: 

\begin{theorem}
Let $P_1,\dots, P_w\in \cF$ and $I=P_1\cdots P_w$. Then the fiber ring $F(I)$ is Koszul. 
\end{theorem}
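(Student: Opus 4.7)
The plan is to realize $F(I)$ as a diagonal subalgebra of the multi-fiber ring $F(P_1,\dots,P_w)$ from Theorem \ref{CMPLF} and then leverage the Cartwright-Sturmfels structure exposed in the proof of that theorem, in the spirit of the strategy from \cite{C}.

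First I would identify $F(I)$ inside $F(P_1,\dots,P_w)$. Since $I=P_1\cdots P_w$ is generated by products $L_1\cdots L_w$ with $L_i\in P_i$ a linear form, and such a product corresponds to the multidegree-$(1,\dots,1)$ element $L_1T_1\cdots L_wT_w$ of the multi-fiber ring, one obtains
\[
F(I)\;\cong\;\bigoplus_{k\ge 0} F(P_1,\dots,P_w)_{(k,k,\dots,k)},
\]
which is precisely the diagonal subalgebra of $F(P_1,\dots,P_w)$ along $(1,\dots,1)\in\ZZ^w$, in the sense of Theorem \ref{Blum}. Modding out the multi-Rees relations by $\mm_R$ and extracting this diagonal is the natural passage from the multigraded setting back to the single Rees algebra $R(I)$.

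Next I would invoke the Cartwright-Sturmfels description from the proof of Theorem \ref{CMPLF}: the defining ideal $\cJ$ of $F(P_1,\dots,P_w)$ in its presenting $\ZZ^w$-graded polynomial ring is Cartwright-Sturmfels, having been obtained by eliminating variables from the ideal of $2$-minors of a generic matrix. In particular $\cJ$ admits, with respect to its natural multigrading, a squarefree multigraded generic initial ideal whose generators are monomials of degree at most one in each block of $\ZZ^w$-variables.

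Finally, I would appeal to the strategy of \cite{C}: for a Cartwright-Sturmfels ideal, the defining ideal of any diagonal subalgebra admits a quadratic Gröbner basis. The mechanism is that passing to the diagonal $(1,\dots,1)$ collapses the squarefree multigraded generators of $\ini(\cJ)$ living in multidegree $(2,\dots,2)$ into quadratic monomials of the diagonal coordinate ring, and the Sagbi lifting criterion of Theorem \ref{GBlift} converts these into a quadratic Gröbner basis for the presentation of $F(I)$. Koszulness of $F(I)$ then follows from Fröberg's theorem.

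The main obstacle will be this last step: extracting a genuinely \emph{quadratic} Gröbner basis for the diagonal subalgebra from the squarefree multigraded initial ideal of $\cJ$, given that $F(P_1,\dots,P_w)$ itself need not be defined by quadrics, as Example \ref{notquad} shows. The delicate point is to show that only the multidegree-$(2,\dots,2)$ part of $\ini(\cJ)$ contributes to the defining equations of the diagonal subalgebra, a reduction that is precisely the content of the diagonal machinery developed in \cite{C} and whose careful implementation in the Cartwright-Sturmfels setting is what makes the argument work.
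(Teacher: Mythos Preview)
Your overall plan matches the paper's one-line indication: identify $F(I)$ as the $(1,\dots,1)$-diagonal of the multi-fiber ring $F(P_1,\dots,P_w)$, use the Cartwright--Sturmfels structure established in the proof of Theorem~\ref{CMPLF}, and then invoke the machinery of \cite{C}. The paper gives no further details beyond citing \cite{C} together with~\ref{CMPLF}, so at the level of strategy you are on target.

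Where your write-up goes astray is in the description of the final step. The sentence about ``squarefree multigraded generators of $\ini(\cJ)$ living in multidegree $(2,\dots,2)$'' is incoherent: squarefree in the Cartwright--Sturmfels sense means degree at most $1$ in each block of the $\ZZ^w$-grading, so no such generator can sit in multidegree $(2,\dots,2)$. What actually happens in \cite{C} is a Sagbi argument carried out directly in $R$: one shows that the products $L_1\cdots L_w$ (with $L_i$ a linear generator of $P_i$) form a Sagbi basis of the subalgebra $F(I)\subseteq R$ for a suitable monomial order, so that $\ini(F(I))$ is the monomial algebra generated by the corresponding products of variables; this toric algebra is ``sortable'' and hence has a quadratic Gr\"obner basis, which then lifts via Theorem~\ref{GBlift}. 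The Cartwright--Sturmfels input from~\ref{CMPLF} is what guarantees the Sagbi property in general position. Your invocation of Theorem~\ref{GBlift} is therefore apt, but it should be applied to $F(I)$ as a subalgebra of $R$, not to some putative collapse of $\ini(\cJ)$; and the quadratic relations arise from the sorting procedure on monomials, not from restricting $\ini(\cJ)$ to a fixed multidegree. Rewriting your last paragraph along these lines would make the argument sound.
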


\begin{remark}
The result on linear products of ideals generated by linear forms has been generalized by Derksen and Sidman \cite{DS}. Roughly speaking, 
they show that the regularity of an ideal that is constructed from ideals of linear forms by $D$ successive basic operations like products, 
intersections and sums is  bounded by $D$.
\end{remark} 

\section{Product of Borel fixed ideals of maximal minors} 
\label{sectneid}
Let $K$ be a field.  Let $X=(X_{ij})$ be the matrix of size $n\times n$   whose entries are the indeterminates of the polynomial ring $R=K[x_{ij}: 1\le i, j \le n]$. 
Let $t$ and $a$ be positive integers with that $t+a\leq n+1$ and set 
$$
X_t(a)=(X_{ij}: 1\le i \le t,\ a\le j \le n),
$$
and define the  \emph{northeast ideal} $I_t(a)$ associated to the pair $(t,a)$  to be the ideal generated by the $t$-minors, i.e., $t\times t$-subdeterminants,   of the matrix $X_t(a)$. Note that, by construction, $X_t(a)$ has size $t\times (n+1-t)$ and $t\leq (n+1-t)$. Hence $I_t(a)$ is the ideal of maximal minors of $X_t(a)$. 
There is a  natural action of $\GL_n(K)\times \GL_n(K)$ on $R$. Let $B_n(K)$ denote the subgroup   of lower triangular matrices in $\GL_n(K)$ and by $B'_n(K)$  the subgroup of upper  triangular matrices.  Note that the ideals $I_t(a)$ are fixed by the  action of the Borel group $B_m(K)\times B'_n(K)$. Hence they are  Borel fixed ideals of maximal minors.    

Let $<$ be the lexicographic term order on $R$ associated to the total order 
$$
X_{11}>X_{12}>\dots>X_{1n}>X_{21}>\dots>X_{nn}.
$$
Then the initial monomial of a $t\times t$ subdeterminant of $X$ is the product of its diagonal elements.
Therefore $<$ is a \emph{diagonal} monomial order. The statements below remain true if one replaces $<$ with another diagonal monomial order. 

Set 
$$
J_t(a)= (  X_{1b_1}\cdots X_{tb_t} :  a\leq b_1<\dots< b_t \leq n).
$$
It is the ideal generated by the initial monomials of the $t\times t$ minors in $I_t(a)$.

The ideal of maximal minors of a matrix of variables, such as  $X_t(a)$, is a prime ideal with primary  powers. Furthermore 
$$
\ini_<(I_t(a))=J_t(a).
$$ 
The main result of \cite{BC} is the following: 

\begin{theorem}  The families 
$$
\cF=\{  I_t(a) : \ \  t>0, \ \  a>0,   \ \ t+a\leq n+1 \}
$$
and
$$
\cF'=\{  J_t(a) : \ \  t>0, \ \  a>0,   \ \ t+a\leq n+1 \}
$$
have linear products.   

Furthermore, given $(t_1,a_1),\dots, (t_w,a_w)$, set $I_j=I_{t_j}(a_j)$ and $J_j=J_{t_j}(a_j)$. Then  
$$
\ini_<(R(I_1,\dots, I_w))= R(J_1,\dots, J_w),
$$
and both multi-Rees algebras $R(I_1,\dots, I_w)$ and $R(J_1,\dots, J_w)$ as well as their  multi-fiber rings $F(I_1,\dots, I_w)$ and $F(J_1,\dots, J_w)$ are Cohen-Macaulay, normal and defined by Gr\"obner bases of quadrics. 
\end{theorem}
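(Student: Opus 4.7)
The plan is to reduce the whole theorem to a combinatorial analysis of the monomial family $\cF'$ via the Sagbi lifting Theorem \ref{GBlift}, and then to invoke the quadratic initial ideal criterion from Section \ref{initial} together with Hochster's theorem to extract the homological consequences.

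\emph{Monomial family.} Each generator of $J_t(a)$ has the form $X_{1b_1}\cdots X_{tb_t}$ with $a\le b_1<\cdots<b_t\le n$, placing one column index in each of the rows $1,\dots,t$. Given two generators $u\in J_{t_1}(a_1)$ and $v\in J_{t_2}(a_2)$, reshuffling the column indices row by row in the classical \emph{sorting} style produces a canonical pair $(u',v')$ with $uv=u'v'$, $u'\in J_{t_1}(a_1)$ and $v'\in J_{t_2}(a_2)$. I would present $R(J_1,\dots,J_w)$ as a residue class ring of $\cS=R[Z_{j,m}]$ via $\Psi(Z_{j,m})=m\,T_j$, and verify by a direct S-pair check in the sorting monomial order $\prec$ that the sorting binomials $Z_{j_1,u}Z_{j_2,v}-Z_{j_1,u'}Z_{j_2,v'}$, together with the linear relations coupling $Z_{j,m}$ to monomials in the $X_{ij}$, form a quadratic Gr\"obner basis of $\Ker\Psi$. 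Since the initial algebra is squarefree and the sorting moves generate a normal affine semigroup, $R(J_1,\dots,J_w)$ is itself normal.

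\emph{Lifting to $\cF$ and consequences.} The maximal minors of $X_t(a)$ satisfy the classical Pl\"ucker/straightening relations, so every sorting binomial from the monomial case admits a lift to an element of $\Ker\Phi$ (with $\Phi(Z_{j,m})=[m]\,T_j$, where $[m]$ is the minor whose leading monomial is $m$), all correction terms having diagonals strictly smaller in $<$. Theorem \ref{GBlift}, applied with the extended order $\prec_\Psi$, then yields simultaneously the Sagbi equality
$$
\ini_<\!\bigl(R(I_1,\dots,I_w)\bigr)=R(J_1,\dots,J_w)
$$
and a quadratic Gr\"obner basis of the defining ideal of $R(I_1,\dots,I_w)$. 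The same statements pass to the multi-fiber rings, which are algebra retracts obtained by killing the $X_{ij}$. Quadratic generators for $\ini_{\prec_\Psi}(\Ker\Phi)$ plug directly into the quadratic initial ideal criterion from Section \ref{initial}, giving $\reg_0\bigl(R(I_1,\dots,I_w)\bigr)=0$ and Koszulness; hence both $\cF$ and $\cF'$ have linear products by Theorems \ref{Tim} and \ref{Blum-special}. The normality of the monomial multi-Rees algebra transfers to $R(I_1,\dots,I_w)$ through the Sagbi flat degeneration, and Cohen-Macaulayness of all four algebras then follows from Hochster's theorem \cite[Th.~6.3.5]{BH}.

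\emph{Main obstacle.} The combinatorial heart of the proof is the monomial step: producing a sorting operation that is well-defined uniformly across pairs of generators from possibly distinct $J_{t_i}(a_i)$ (with different $t_i$ and $a_i$) and carrying out the S-pair verification that the sorting binomials form a quadratic Gr\"obner basis of the toric ideal of $R(J_1,\dots,J_w)$. Once this combinatorial core is secured, the lifting step is essentially mechanical: the Pl\"ucker relations provide the required lifts with the correct leading-term behavior, and Theorem \ref{GBlift} does the rest.
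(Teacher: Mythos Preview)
Your overall architecture agrees with the paper's: establish a quadratic Gr\"obner basis for the toric ideal of the monomial multi-Rees algebra $R(J_1,\dots,J_w)$, lift it to $R(I_1,\dots,I_w)$ via the Sagbi/straightening mechanism of Theorem \ref{GBlift}, and then read off linear products, Koszulness, normality and Cohen--Macaulayness from the criteria of Section \ref{initial} and Hochster's theorem. The divergence is exactly at the step you yourself flag as the main obstacle.

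The paper does \emph{not} handle the combinatorial core by classical sorting. It first proves the decomposition formulas of Theorem \ref{primdecBmin},
\[
J_S=\bigcap_{u,b} J_u(b)^{e_{ub}(S)},\qquad I_S=\bigcap_{u,b} I_u(b)^{e_{ub}(S)},
\]
and uses them as a membership test: a monomial (respectively a product of minors) lies in $J_S$ (respectively $I_S$) if and only if it has the prescribed order of vanishing along every $J_u(b)$ (respectively $I_u(b)$). This criterion is what makes a normal form well-defined across products of generators coming from \emph{different} pairs $(t_i,a_i)$, and the passage to normal form decomposes into quadratic rewriting moves that constitute the desired Gr\"obner basis. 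Naive row-by-row sorting does not do this job once the $a_i$ differ: for instance with $u=X_{1,2}X_{2,3}\in J_2(2)$ and $v=X_{1,1}\in J_1(1)$, sorting the row-$1$ indices would send $u$ to $X_{1,1}X_{2,3}\notin J_2(2)$. One needs a rule that respects all the constraints $a_i\le b_1<\dots<b_{t_i}$ simultaneously, and the intersection formula is precisely the device that encodes them.

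Two smaller points. Hochster's theorem applies directly only to the semigroup rings $R(J_1,\dots,J_w)$ and $F(J_1,\dots,J_w)$; Cohen--Macaulayness of $R(I_1,\dots,I_w)$ and $F(I_1,\dots,I_w)$ is then inherited from their initial algebras through the Sagbi degeneration, not from Hochster. And normality of $R(I_1,\dots,I_w)$ is not a purely formal consequence of the degeneration either; the clean route is again Theorem \ref{primdecBmin}, which shows that every product $I_S$ is an intersection of primary ideals with primary powers and hence integrally closed.
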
 

The proof of the theorem is based on the general strategy described in Section \ref{initial} and on the following decomposition formulas proved in \cite{BC}:

\begin{theorem}  
\label{primdecBmin}
For every $S=\{ (t_1,a_1),\dots, (t_w,a_w) \} $ set $I_S=\prod_{i=1}^w I_{t_i}(a_i)$ and $J_S=\prod_{i=1}^w J_{t_i}(a_i)$. Then
$$
I_S=\bigcap_{u,b}  I_u(b)^{e_{ub}(S)}
$$
and 
$$
J_S=\bigcap_{u,b}  J_u(b)^{e_{ub}(S)}
$$
where 
$$
e_{ub}(S)=  |\{i: b \le a_i \text{ and } u\le t_i \}|.
$$
\end{theorem}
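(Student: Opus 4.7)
The plan is to prove the monomial formula for $J_S$ by a combinatorial decomposition and then transfer it to $I_S$ via the diagonal Gr\"obner basis for maximal minors.

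The easy containment $I_{t_i}(a_i)\subseteq I_u(b)$ for $u\le t_i$ and $b\le a_i$ follows from Laplace expansion of a $t_i$-minor of $X_{t_i}(a_i)$ along its top $u$ rows, which expresses it as a combination of $u$-minors using rows $1,\dots,u$ and columns in $\{a_i,\dots,n\}\subseteq\{b,\dots,n\}$. Applying the same reasoning to leading diagonals yields $J_{t_i}(a_i)\subseteq J_u(b)$. Hence $I_S$ has at least $e_{ub}(S)$ factors in $I_u(b)$, so $I_S\subseteq I_u(b)^{e_{ub}(S)}$, and analogously $J_S\subseteq J_u(b)^{e_{ub}(S)}$. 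Intersecting over all $(u,b)$ yields one direction of both formulas.

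For the reverse inclusion $\bigcap J_u(b)^{e_{ub}(S)}\subseteq J_S$ I would argue combinatorially. A monomial $X^\alpha$ lies in $J_u(b)^k$ precisely when there are $k$ strictly increasing column-tuples $b\le c^{(\ell)}_1<\dots<c^{(\ell)}_u\le n$ whose combined multiplicities are dominated by the entries $\alpha_{ij}$ for $i=1,\dots,u$, $j\ge b$. Given $X^\alpha$ in the intersection, pick $(t_j,a_j)\in S$ (say of maximal $t_j$) and peel off an increasing transversal $\mu_j=X_{1,c_1}\cdots X_{t_j,c_{t_j}}$, with $a_j\le c_1<\dots<c_{t_j}$, that divides $X^\alpha$. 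The inequalities indexed by those $(u,b)$ with $u\le t_j$ and $b\le a_j$ supply the marginal bounds for a Hall-type marriage argument producing such a $\mu_j$; the crucial point is to choose $\mu_j$ (for example by a greedy leftmost rule) so that the residual $X^\alpha/\mu_j$ lies in $\bigcap J_u(b)^{e_{ub}(S')}$ for $S'=S\setminus\{(t_j,a_j)\}$. Iterating completes the decomposition $X^\alpha=\prod\mu_j\in J_S$.

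To transfer to $I_S$, I would invoke the classical theorem that the maximal minors of a matrix of variables form a Gr\"obner basis under the diagonal order $<$ which, extended to powers, gives $\ini_<(I_u(b)^k)=J_u(b)^k$. Combined with the previous two steps one obtains
$$
J_S=\prod\ini_<(I_{t_i}(a_i))\subseteq\ini_<(I_S)\subseteq\ini_<\!\Bigl(\bigcap I_u(b)^{e_{ub}(S)}\Bigr)\subseteq\bigcap J_u(b)^{e_{ub}(S)}=J_S,
$$
so all inclusions are equalities. In particular $I_S$ and $\bigcap I_u(b)^{e_{ub}(S)}$ share the same initial ideal, and since $I_S\subseteq\bigcap I_u(b)^{e_{ub}(S)}$, equality of Hilbert functions forces $I_S=\bigcap I_u(b)^{e_{ub}(S)}$. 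The main obstacle is the combinatorial step: choosing $\mu_j$ so that the entire family of intersection hypotheses is inherited by $X^\alpha/\mu_j$, compatibly with the strict column ordering in every $J_{t_j}(a_j)$. This is carried out in \cite{BC} via a tableau/KRS-style argument exploiting the northeast geometry of the submatrices $X_t(a)$.
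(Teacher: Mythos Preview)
The paper does not actually prove this theorem: it is quoted from \cite{BC}, and the surrounding text only indicates how the result is \emph{used} (via ``normal forms'' and ``quadratic rewriting rules'') in the proof of the preceding theorem on linear products. So there is no in-paper proof to compare against; your outline is, in effect, a sketch of what \cite{BC} does.

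That said, your outline is sound and matches the structure one finds in \cite{BC}. The easy containments via Laplace expansion (for $I$) and truncation of the diagonal word (for $J$) are exactly right. The transfer step is also correct: the chain
\[
J_S\subseteq \ini_<(I_S)\subseteq \ini_<\Bigl(\bigcap I_u(b)^{e_{ub}(S)}\Bigr)\subseteq \bigcap \ini_<\bigl(I_u(b)^{e_{ub}(S)}\bigr)=\bigcap J_u(b)^{e_{ub}(S)}
\]
uses only the general inclusion $\ini_<(A\cap B)\subseteq \ini_<(A)\cap\ini_<(B)$ and the classical fact that powers of an ideal of maximal minors have the ``expected'' diagonal initial ideal; equality of Hilbert functions then forces $I_S=\bigcap I_u(b)^{e_{ub}(S)}$.

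The genuine work, as you yourself flag, is the reverse inclusion for $J_S$. Your description (``peel off a transversal $\mu_j$ by a greedy leftmost rule so that the residual inherits the lowered exponents'') is the right idea, but it is not a proof as written: the delicate point is showing that the greedy choice simultaneously decreases \emph{every} relevant exponent $e_{ub}$ by exactly one, across all $(u,b)$ with $u\le t_j$ and $b\le a_j$, while leaving the others intact. A naive Hall argument produces \emph{some} transversal, not necessarily one compatible with all the side constraints. In \cite{BC} this is handled precisely by the ``normal form'' combinatorics (of KRS/tableau type) that the paper alludes to just after the statement; the quadratic rewriting rules encode exactly the swaps needed to make the greedy extraction go through. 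So your proposal is an accurate high-level summary, with the acknowledged gap being the substantive content of the cited paper.
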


The theorem allows one to define a certain normal form for elements in $I_S$. The passage to the normal form uses quadratic rewriting rules that represent Gröbner basis elements of the multi-Rees algebra.

The exponent $e_{ub}(S)$ can be characterized as well by the equalities 
$$
e_{ub}(S)= \max\{ j :  I \subseteq I_u(b)^j \} = \max\{ j :  J \subseteq J_u(b)^j \}.
$$
The theorem shows that the family $\cF$ of NE-ideals of maximal minors has the multiplicative intersection property since the representation of $I$ as an intersection is a primary decomposition: each ideal $I_u(b)$ is prime with primary powers. The representation of $J$ is not a primary decomposition. To get a primary decomposition for $J$ one uses  the fact that $J_u(b)$ is radical with decomposition 
$$
J_u(b)=\bigcap_{F\in F_{u,b} } P_F
$$
where $F_{u,b}$ denotes the set of facets of the simplicial complex associated to $J_u(b)$, and $P_F$ the prime ideal  associated to $F$. Moreover, 
$$
J_u(b)^k=\bigcap_{F\in F_{u,b} } P_F^k
$$
as proved in  \cite[Prop.~7.2]{BC3}. Therefore, while the family  $\cF'$  does not have the multiplicative intersection property, its members are nevertheless P-adically closed.

The primary decomposition of $I_S$ given in  \ref{primdecBmin} can be refined as follows: 

\begin{proposition}
\label{irred} 
Given $S=\{ (t_1,a_1),\dots, (t_w,a_w) \} $, let  $Y$ be the set of the elements $(t,a)\in \NN_{+}^2\setminus S $  such that there exists $(u,b)\in  \NN_{+}^2$ for which  $(t,b),(u,a)\in S$ and $t<u$, $a<b$. 
Then we have the following  primary decomposition:  
$$
I_S = \bigcap_{(v,c)\in S \cup Y}I_v(c)^{e_{vc}(S)}.
$$ 
Furthermore this decomposition is irredundant, provided all the points $(u,b)$ above can be taken so that $u+b\leq  n+1$.
\end{proposition}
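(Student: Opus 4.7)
The plan is to derive the decomposition from Theorem~\ref{primdecBmin}, which already expresses $I_S$ as the intersection of the ideals $I_u(b)^{e_{ub}(S)}$ over all admissible pairs $(u,b)$ with $u+b\le n+1$. Since each $I_v(c)$ is a prime ideal with primary powers, the right-hand side is automatically a primary decomposition, so the only substantive issue in the first assertion is the identity of ideals, and for this it suffices to show that the factors indexed by $(v,c)\notin S\cup Y$ are redundant. The irredundancy claim will then be treated separately.

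Fix $(v,c)\in\NN_{+}^2\setminus(S\cup Y)$ with $v+c\le n+1$ and $e_{vc}(S)>0$, and set $I_{vc}=\{i:t_i\ge v,\ a_i\ge c\}$. Put
$$
u^*=\min_{i\in I_{vc}}t_i,\qquad b^*=\min_{i\in I_{vc}}a_i.
$$
Since $u^*\ge v$ and $b^*\ge c$ one has $I_{u^*}(b^*)\subseteq I_v(c)$, and a direct verification shows that the index set of $e_{u^*b^*}(S)$ coincides with $I_{vc}$, whence $e_{u^*b^*}(S)=e_{vc}(S)$ and $I_{u^*}(b^*)^{e_{u^*b^*}(S)}\subseteq I_v(c)^{e_{vc}(S)}$. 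It remains to check $(u^*,b^*)\in S\cup Y$: if some $i\in I_{vc}$ attains both minima then $(u^*,b^*)=(t_i,a_i)\in S$; otherwise one produces distinct $i_1,i_2\in I_{vc}$ with $t_{i_1}=u^*<t_{i_2}$ and $a_{i_2}=b^*<a_{i_1}$, and then $(t_{i_2},a_{i_1})$ is the witness placing $(u^*,b^*)=(t_{i_1},a_{i_2})$ in $Y$. The bound $u^*+b^*\le t_{i_2}+a_{i_2}\le n+1$ ensures that $I_{u^*}(b^*)$ is indeed one of the factors of Theorem~\ref{primdecBmin}, so the factor at $(v,c)$ may be dropped, proving the first assertion.

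For irredundancy the task is to produce, for every $(v,c)\in S\cup Y$, an element of $\bigcap_{(v',c')\in (S\cup Y)\setminus\{(v,c)\}}I_{v'}(c')^{e_{v'c'}(S)}$ that escapes $I_v(c)^{e_{vc}(S)}$. The hypothesis ensures that for every $(v,c)\in Y$ the defining witness $(u,b)$ can be chosen with $u+b\le n+1$, so the ideal $I_u(b)$ is a bona fide NE-ideal of maximal minors and can be used as a building block. The natural test elements are products of judiciously chosen maximal minors of the matrices $X_{t_j}(a_j)$ (and, when $(v,c)\in Y$, of $X_u(b)$); since each $I_{t_j}(a_j)$ is prime with primary powers, a single such minor contributes $1$ to $v_{I_{v'}(c')}$ precisely when $v'\le t_j$ and $c'\le a_j$, so the problem reduces to a combinatorial counting on the staircase $S\cup Y$. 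The main obstacle is exactly this bookkeeping: one must tune the multiplicities of the chosen minors so that the total valuation reaches $e_{v'c'}(S)$ at every $(v',c')\ne(v,c)$ while remaining strictly below $e_{vc}(S)$ at $(v,c)$, and it is precisely for this that the availability of an admissible witness $(u,b)$ at every $(v,c)\in Y$ is indispensable.
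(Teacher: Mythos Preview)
The paper states Proposition~\ref{irred} without proof, so there is no argument in the text to compare yours against. I will therefore assess your proposal on its own.

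Your treatment of the first assertion is correct and complete. Starting from Theorem~\ref{primdecBmin}, you show that any component indexed by $(v,c)\notin S\cup Y$ with $e_{vc}(S)>0$ is absorbed by the component at $(u^*,b^*)=(\min_{i\in I_{vc}}t_i,\ \min_{i\in I_{vc}}a_i)$: the inclusion $I_{u^*}(b^*)\subseteq I_v(c)$ follows from Laplace expansion, the equality $e_{u^*b^*}(S)=e_{vc}(S)$ is immediate, the verification that $(u^*,b^*)\in S\cup Y$ via the witnesses $(t_{i_1},a_{i_1}),(t_{i_2},a_{i_2})$ is clean, and the bound $u^*+b^*\le t_{i_2}+a_{i_2}\le n+1$ ensures the component really occurs in the intersection. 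This part stands.

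The irredundancy part, however, is not a proof but a programme. Two concrete issues: first, the valuation claim ``a single such minor contributes $1$ to $v_{I_{v'}(c')}$ precisely when $v'\le t_j$ and $c'\le a_j$'' is false as stated---a maximal minor of $X_{t_j}(a_j)$ whose column set happens to lie in $\{c',\dots,n\}$ for some $c'>a_j$ does belong to $I_{v'}(c')$, so the contribution depends on the specific minor, not just on $(t_j,a_j)$. You would at least have to pin down which minors you take (e.g.\ leftmost column sets) and then justify the valuation. Second, even granting a corrected valuation statement, you do not exhibit the element: you say one must ``tune the multiplicities'' so that the counts come out right, but you never carry out this tuning or prove it is possible. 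That combinatorial verification is the entire content of the irredundancy claim, and it is precisely where the extra hypothesis $u+b\le n+1$ has to be used in a substantive way. As written, the second half identifies the obstacle but does not overcome it.
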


Note that for a given $S$, the primary decomposition in \ref{irred} is irredundant if $n$ is sufficiently large. Therefore we obtain: 

\begin{corollary}
\label{stable} 
Given $S=\{ (t_1,a_1),\dots, (t_w,a_w) \} $ assume that $n$ is sufficiently large. Then 
$$
\Ass(R/I_S)=\Ass(R/I_S^k)
$$ 
for all $k>0$. 
\end{corollary}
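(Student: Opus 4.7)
The plan is to reduce the statement to a direct comparison of the irredundant primary decompositions provided by Proposition \ref{irred} applied to $S$ and to the multiset obtained by repeating every element of $S$ exactly $k$ times. First I would observe that, writing $S^{(k)}$ for the list $((t_1,a_1),\dots,(t_w,a_w))$ in which each pair is repeated $k$ times, one has
$$
I_S^k \;=\; \Bigl(\prod_{i=1}^{w} I_{t_i}(a_i)\Bigr)^{k} \;=\; \prod_{i=1}^{w} I_{t_i}(a_i)^{k} \;=\; I_{S^{(k)}},
$$
so the problem is to compare $\Ass(R/I_S)$ with $\Ass(R/I_{S^{(k)}})$.

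Next I would apply Proposition \ref{irred} to both $S$ and $S^{(k)}$. Two easy bookkeeping observations carry the argument. First, the set $Y$ attached to a multiset depends only on its underlying set, because the witnessing condition ``$(t,b),(u,a)\in S$ with $t<u,\ a<b$'' is a membership condition; consequently $S\cup Y$ and $S^{(k)}\cup Y^{(k)}$ coincide as sets. Second, the counting function scales linearly: $e_{vc}(S^{(k)})=k\cdot e_{vc}(S)$ for every $(v,c)$. Combining these with Proposition \ref{irred} gives the parallel decompositions
$$
I_S \;=\; \bigcap_{(v,c)\in S\cup Y} I_v(c)^{e_{vc}(S)},\qquad
I_S^{k} \;=\; \bigcap_{(v,c)\in S\cup Y} I_v(c)^{k\,e_{vc}(S)}.
$$

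The final step is to note that for $n$ sufficiently large both decompositions are irredundant. Indeed, the hypothesis in Proposition \ref{irred} that guarantees irredundance -- namely that every relevant witness $(u,b)$ can be chosen with $u+b\le n+1$ -- depends only on the underlying set of the index multiset; therefore a single threshold $n_0=n_0(S)$ works simultaneously for $S$ and for every $S^{(k)}$. Because each $I_v(c)$ is prime with primary powers, the associated primes of $R/I_S$ and of $R/I_S^{k}$ are read off directly from the irredundant decompositions above, and both equal $\{\,I_v(c):(v,c)\in S\cup Y\,\}$. This yields the desired equality. The only technical point one has to pin down -- and the step I would expect to check most carefully -- is precisely this uniformity in $k$ of the ``$n$ sufficiently large'' condition; but since the $Y$ that appears in Proposition \ref{irred} is intrinsic to the underlying set of the index multiset, no new largeness condition is created when passing from $S$ to $S^{(k)}$.
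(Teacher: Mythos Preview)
Your proof is correct and follows exactly the approach the paper has in mind. The paper derives the corollary in a single sentence from Proposition~\ref{irred} (``Note that for a given $S$, the primary decomposition in \ref{irred} is irredundant if $n$ is sufficiently large''), and your argument simply unpacks this: apply Proposition~\ref{irred} to both $S$ and its $k$-fold repetition $S^{(k)}$, observe that the index set $S\cup Y$ depends only on the underlying set while the exponents scale by $k$, and conclude that the irredundance threshold is uniform in $k$.
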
 

In some cases the equality stated in Corollary  \ref{stable} holds true also for small values of $n$. 

\begin{remark}
(a) There exist well-known families of ideals that have the multiplicative intersection property, but lack linear products. In characteristic $0$ this holds for the ideals $I_t(X)$, $t=0,\dots,m$, that are generated by all $t$-minors of the full matrix $X$ if $m>2$. See Bruns and Vetter \cite[Th.~10.9]{BV}.

(b) In \cite{BCV} the authors and Varbaro proved that the ideal of maximal minors of a matrix of linear forms has linear powers under certain conditions that are significantly weaker than ``full'' genericity. The techniques applied in \cite{BCV} are quite different from those on which the results of this note rely. 
\end{remark}

\end{document}